\newcommand{\overbar}[1]{\mkern 1.5mu\overline{\mkern-1.5mu#1\mkern-1.5mu}\mkern 1.5mu}
\theoremstyle{plain}
\newtheorem{theorem}{Theorem}[section]
\theoremstyle{definition}
\theoremstyle{Conjecture}
\newtheorem{Conjecture}[theorem]{Conjecture}
\newcommand{\Spvek}[2][r]{%
	\gdef\@VORNE{1}
	\left(\hskip-\arraycolsep%
	\begin{array}{#1}\vekSp@lten{#2}\end{array}%
	\hskip-\arraycolsep\right)}
\def\vekSp@lten#1{\xvekSp@lten#1;vekL@stLine;}
\def\vekL@stLine{vekL@stLine}
\def\xvekSp@lten#1;{\def\temp{#1}%
	\ifx\temp\vekL@stLine
	\else
	\ifnum\@VORNE=1\gdef\@VORNE{0}
	\else\@arraycr\fi%
	#1%
	\expandafter\xvekSp@lten
	\fi}
\begin{document}
	\title[Jacobi matrices for fractal measures]{Asymptotic properties of Jacobi matrices for a family of fractal measures}
	\author{G\"{o}kalp Alpan}
\address{Department of Mathematics, Bilkent University, 06800 Ankara, Turkey}
\email{gokalp@fen.bilkent.edu.tr}
\thanks{The authors are partially supported by a grant from T\"{u}bitak: 115F199.}
	\author{Alexander Goncharov}
\address{Department of Mathematics, Bilkent University, 06800 Ankara, Turkey}

\email{goncha@fen.bilkent.edu.tr}

	\author{Ahmet N\.{i}hat  \c{S}\.{i}m\c{s}ek}
	\address{Department of Mathematics, Bilkent University, 06800 Ankara, Turkey}
	
	\email{	nihat.simsek@bilkent.edu.tr}

\subjclass[2010]{37F10, 42C05 \and 30C85}
\keywords{Cantor sets, Parreau-Widom sets, orthogonal polynomials, zero spacing, \and Widom factors}

\begin{abstract}
We study the properties and asymptotics of the Jacobi matrices associated with equilibrium measures of the weakly equilibrium Cantor sets. These family of Cantor sets were defined and different aspects of orthogonal polynomials on them were studied recently. Our main aim is numerically examine some conjectures concerning orthogonal polynomials which do not directly follow from previous results. We also compare our results with more general conjectures made for recurrence coefficients associated with fractal measures supported on $\mathbb{R}$.


\end{abstract}
\maketitle
\section{Introduction}

For a unit Borel measure $\mu$ with an infinite compact support on $\mathbb{R}$, using the Gram-Schmidt process for the set $\{1,x,x^2,\ldots\}$ in $L^2(\mu)$, one can find a sequence of polynomials $(q_n(\cdot;\mu))_{n=0}^\infty$ satisfying $$\int q_m(x;\mu)q_n(x;\mu)\,d\mu(x)=\delta_{mn}$$ where $q_n(\cdot;\mu)$ is of degree $n$. Here, $q_n(\cdot;\mu))$ is called the $n$-th orthonormal polynomial for $\mu$. We denote its positive leading coefficient  by $\kappa_n$ and $n$-th monic orthogonal polynomial $q_n(\cdot;\mu)/\kappa_n$ by $Q_n(\cdot;\mu)$. If we assume that $Q_{-1}(\cdot;\mu):=0$ and $Q_0(\cdot;\mu):=1$ then there are two bounded sequences $(a_n)_{n=1}^\infty$, $(b_n)_{n=1}^\infty$ such that the polynomials $(Q_n(\cdot;\mu))_{n=0}^{\infty}$
satisfy a three-term recurrence relation
$$Q_{n+1}(x;\mu) = (x- b_{n+1})Q_{n}(x;\mu) - a_n^2 \, Q_{n-1}(x;\mu),\,\,\,\,\,\,\,\,n \in \mathbb{N}_0, $$ where $a_n>0$, $b_n\in\mathbb{R}$ and $\mathbb{N}_{0}=\mathbb{N}\cup \{0\}$. 

Conversely, if two bounded sequences $(a_n)_{n=1}^\infty$ and $(b_n)_{n=1}^\infty$ are given with $a_n>0$ and $b_n\in\mathbb{R}$ for each $n\in\mathbb{N}$ then we can define the corresponding Jacobi matrix $H$, which is a self-adjoint bounded operator acting on $l^2(\mathbb{N})$, as the following,
\begin{equation}
H=\left( \begin{array}{ccccc}
b_1 & a_1 &0 & 0 &\ldots \\
a_1 & b_2 & a_2 & 0& \ldots \\
0& a_2 & b_2 & a_3 & \ldots \\
\vdots & \vdots & \vdots & \vdots&\ddots \\
\end{array} \right).
\end{equation}

The (scalar valued) spectral measure $\mu$ of $H$ for the cyclic vector $(1,0,\ldots)^T$ is the measure that has $(a_n)_{n=1}^\infty$ and $(b_n)_{n=1}^\infty$ as recurrence coefficients. Due to this one to one correspondence between measures and Jacobi matrices, we denote the Jacobi matrix associated with $\mu$ by $H_\mu$. For a discussion of the spectral theory of orthogonal polynomials on $\mathbb{R}$ we refer the reader to \cite{Sim3,ase}.

Let $c=(c_n)_{n=-\infty}^\infty$ be a two sided sequence taking values on $\mathbb{C}$ and $c^j=(c_{n+j})_{n=-\infty}^\infty$ for $j\in\mathbb{Z}$. Then $c$ is called almost periodic if $\{c^j\}_{j\in\mathbb{Z}}$ is precompact in $l^{\infty}(\mathbb{Z})$. A one-sided sequence $d=(d_n)_{n=1}^\infty$ is called almost periodic if it is the restriction of a two sided almost periodic sequence to $\mathbb{N}$. Each one sided almost periodic sequence has only one extension to $\mathbb{Z}$ which is almost periodic, see Section 5.13 in \cite{Sim3}. Hence one-sided and two sided almost periodic sequences are essentially the same objects. A Jacobi matrix $H_\mu$ is called almost periodic if the sequences of recurrence coefficients $(a_n)_{n=1}^\infty$ and $(b_n)_{n=1}^\infty$ for $\mu$ are almost periodic.  We consider in the following sections only one-sided sequences due to the nature of our problems but, in general, for the almost periodicity, it is much more natural to consider sequences on $\mathbb{Z}$ instead of $\mathbb{N}$. 

A sequence $s= (s_n)_{n=1}^\infty$ is called asymptotically almost periodic if there is an almost periodic sequence $d=(d_n)_{n=1}^\infty$ such that $d_n-s_n\rightarrow 0$ as $n\rightarrow \infty$. In this case $d$ is unique and it is called the almost periodic limit.   See \cite{petersen, Sim3, teschl} for more details on almost periodic functions.

Several sufficient conditions on $H_\mu$ to be almost periodic or asymptotically almost periodic are given in \cite{peher,sodin} for the case when  $\mathrm{ess\, supp}(\mu)$ (that is the support of $\mu$ excluding its isolated points) is a Parreau-Widom set (Section 3) or in particular homogeneous set in the sense of Carleson (see \cite{peher} for the definition). We remark that some symmetric Cantor sets and generalized Julia sets (see \cite{peher,alpgon2}) are Parreau-Widom. By \cite{Barnsley4, yuditskii}, for equilibrium measures of some polynomial Julia sets corresponding Jacobi matrices are almost periodic. It was conjectured in \cite{mant2,kruger} that Jacobi matrices for self-similar measures including the Cantor measure are asymptotically almost periodic. We should also mention that some almost periodic Jacobi matrices with applications to physics (see e.g. \cite{avil}), has essential spectrum equal to a Cantor set.

There are many open problems regarding orthogonal polynomials on Cantor sets, such as how to define the Szeg\H{o} class of measures and isospectral torus (see e.g. \cite{chriss, Chris} for the previous results and \cite{Heilman, kruger,mant1,mant3,mant4} for possible extensions of the theory and important conjectures) especially when the support has zero Lebesgue measure. The family of sets that we consider here contains both positive and zero Lebesgue measure sets, Parreau-Widom and non Parreau-Widom sets. Widom-Hilbert factors (see Section 2 for the definition) for equilibrium measures of the weakly equilibrium Cantor sets may be bounded or unbounded depending on the particular choice of parameters. Some properties of these measures related to orthogonal polynomials were already studied in detail but till now we do not have complete characterizations of most of the properties mentioned above in terms of the parameters. Our results and conjectures are meant to suggest some formulations of theorems for further work on these sets as well as other Cantor sets.

The plan of the paper is as follows. In Section 2, we review the previous results on $K(\gamma)$ and provide evidence for the numerical stability of the algorithm obtained in Section 4 in \cite{alpgon} for calculating the recurrence coefficients. In Section 3, we discuss the behavior of recurrence coefficients in different aspects and propose some conjectures about the character of periodicity of the Jacobi matrices. In Section 4, the properties of Widom factors are investigated. We also prove that the sequence of Widom-Hilbert factors for the equilibrium measure of autonomous quadratic Julia sets is unbounded above as soon as the Julia set is totally disconnected. In the last section, we study local behavior of the spacing properties of the zeros of orthogonal polynomials for the equilibrium measures of weakly equilibrium Cantor sets and make a few comments on possible consequences of our numerical experiments.

For a general overview on potential theory we refer the reader to \cite{Ransford,saff}. For a non-polar compact set $K\subset \mathbb{C}$, the equilibrium measure is denoted by $\mu_K$ while $\mathrm{Cap}(K)$ stands for the logarithmic capacity of $K$. The Green's function for the connected component of $\overbar{\mathbb{C}}\setminus K$ containing infinity is denoted by $G_K(z)$. Convergence of measures is understood as weak-star convergence. For the sup norm on $K$ and for the Hilbert norm on $L^2(\mu)$ we use $\|\cdot\|_{L^{\infty}(K)}$ and $\|\cdot\|_{L^{2}(\mu)}$ respectively.

\section{Preliminaries and numerical stability of the algorithm}

Let us repeat the construction of $K(\gamma)$ which was introduced in \cite{gonc}. Let $\gamma=(\gamma_s)_{s=1}^\infty$ be a sequence such that $0<\gamma_s<1/4$ holds for each $s\in\mathbb{N}$ provided that $\sum_{s=1}^\infty 2^{-s}\log{(1/{\gamma_s)}}<\infty$. Set $r_0=1$ and $r_s=\gamma_s r_{s-1}^2$. We define $(f_n)_{n=1}^\infty$ by $f_1(z):=2z(z-1)/\gamma_1+1$ and $f_n(z):=z^2/(2\gamma_n)+1-1/(2\gamma_n)$ for $n>1$. Here $E_0:=[0,1]$ and $E_n:=F_n^{-1}([-1,1])$ where $F_n$ is used to denote $f_n\circ\dots\circ f_1$. Then, $E_n$ is a union of $2^n$ disjoint non-degenerate closed intervals in $[0,1]$ and $E_n\subset E_{n-1}$ for all $n\in\mathbb{N}$. Moreover, $K(\gamma):=\cap_{n=0}^\infty E_n$ is a non-polar Cantor set in $[0,1]$ where $\{0,1\}\subset K(\gamma)$. It is not hard to see that for each different $\gamma$ we end up with a different $K(\gamma)$. 

It is shown in Section 3 of \cite{alpgon} that for all $s\in\mathbb{N}_0$ we have

 \begin{equation} \label{norm1}
 ||Q_{2^s}\left(\cdot;\mu_{K(\gamma)}\right)||_{L^{2}\left(\mu_{K(\gamma)}\right)}=\sqrt{(1-2\,\gamma_{s+1})\,r_s^2/4}.
 \end{equation}
 
 The diagonal elements, the $b_n$'s of $H_{\mu_{K(\gamma)}}$, are equal to $0,5$ by Section 4 in \cite{alpgon}. For the outdiagonal elements by Theorem 4.3 in \cite{alpgon} we have the following relations:
 \begin{align}
\label{a1} a_1=\|Q_1\left(\cdot;\mu_{K(\gamma)}\right)\|_{L^{2}\left(\mu_{K(\gamma)}\right)},\\ 
\label{a2}a_2=\|Q_2\left(\cdot;\mu_{K(\gamma)}\right)\|_{L^{2}\left(\mu_{K(\gamma)}\right)}/\|Q_1\left(\cdot;\mu_{K(\gamma)}\right)\|_{L^{2}\left(\mu_{K(\gamma)}\right)}.
 \end{align}
 If $n+1=2^s>2$  then
 \begin{equation}\label{rec1}
a_{n+1}=\frac{||Q_{2^s}\left(\cdot;\mu_{K(\gamma)}\right)||_{L^{2}\left(\mu_{K(\gamma)}\right)}}{||Q_{2^{s-1}}\left(\cdot;\mu_{K(\gamma)}\right)||_{L^{2}\left(\mu_{K(\gamma)}\right)} \cdot a_{2^{s-1}+1}\cdot a_{2^{s-1}+2}\cdots a_{2^{s}-1}}.
 \end{equation}
If $n+1=2^s(2k+1)$ for some $s\in\mathbb{N}$ and $k\in\mathbb{N},$ then 
\begin{equation}\label{rec2}
a_{n+1}=\sqrt{\frac{\|Q_{2^s}\left(\cdot;\mu_{K(\gamma)}\right)\|_{L^{2}\left(\mu_{K(\gamma)}\right)}^2-a_{2^{s+1}k}^2\cdots a_{2^{s+1}k-2^s+1}^2}
{ a_{2^s(2k+1)-1}^2\cdots a_{2^{s+1}k+1}^2}},
\end{equation}
If $n+1=(2k+1)$ for $k\in\mathbb{N}$ then 
\begin{equation}\label{rec3}
a_{n+1}=\sqrt{\|Q_{1}\left(\cdot;\mu_{K(\gamma)}\right)\|_{L^{2}\left(\mu_{K(\gamma)}\right)}^2-a_{2k}^2}.
\end{equation}

The relations \eqref{norm1}, \eqref{a1}, \eqref{a2}, \eqref{rec1}, \eqref{rec2},  \eqref{rec3} completely determine $(a_n)_{n=1}^\infty$ and naturally define an algorithm. This is the main algorithm that we use and we call it Algorithm 1. There are a couple of results for the asymptotics of $(a_n)_{n=1}^\infty$, see Lemma 4.6 and Theorem 4.7 in \cite{alpgon}.

We want to examine numerical stability of Algorithm 1 since roundoff errors can be huge due to the recursive nature of it. Before this, let us list some remarkable properties of $K(\gamma)$ which will be considered later on. In the next theorem one can found proofs of part $(a)$ in \cite{alpgon3}, $(b)$ and $(c)$  in \cite{alpgon}, $(d)$ and $(e)$ in \cite{alpgon2},  $(f)$ in \cite{alpgonhat}, $(g)$ in \cite{gonc} and $(h)$ and $(i)$ in \cite{g1}. We call $W_n^2(\mu):= \frac{\|Q_n(\cdot;\mu)\|_{L^2(\mu)}}{(\mathrm{Cap}(\mathrm{supp}(\mu)))^n}$ as the $n$-th Widom-Hilbert factor for $\mu$.

\begin{theorem}\label{bigtheorem}
For a given $\gamma=(\gamma_s)_{s=1}^\infty$ let $\varepsilon_s:=1-4\gamma_s$. Then the following propositions hold:
	\begin{enumerate}[label={(\alph*})]
		\item If $\sum_{s=1}^\infty \gamma_s<\infty$ and $\gamma_s\leq 1/32$ for all $s\in\mathbb{N}$ then $K(\gamma)$ is of Hausdorff dimension zero.
		\item If $\gamma_s\leq 1/6$ for each $s\in\mathbb{N}$ then $K(\gamma)$ has zero Lebesgue measure, $\mu_{K(\gamma)}$ is purely singular continuous and $\liminf a_n=0$ for $\mu_{K(\gamma)}.$
		\item Let $\tilde{f}:=(\tilde{f}_s)_{s=1}^\infty$ be a sequence of functions such that $\tilde{f}_s=f_s$ for $1\leq s\leq k$ for some $k\in\mathbb{N}$ and $\tilde{f}_s(z)=2z^2-1$ for $s>k$. Then $\cap_{n=1}^\infty \tilde{F}_n^{-1}([-1,1])=E_k$ where $\tilde{F}_n:=\tilde{f}_n\circ\dots\circ \tilde{f}_1$.
		\item $G_{K(\gamma)}$ is H\"{o}lder continuous with exponent $1/2$ if and only if $\sum_{s=1}^\infty \varepsilon_s<\infty$.
		\item $K(\gamma)$ is a Parreau-Widom set if and only if $\sum_{s=1}^\infty \sqrt{\varepsilon_s}<\infty$.
		\item If $\sum_{s=1}^\infty \varepsilon_s<\infty$ then there is $C>0$ such that for all $n\in\mathbb{N}$ we have $$W_n^2(\mu_{K(\gamma)})= \frac{\|Q_n\left(\cdot;\mu_{K(\gamma)}\right)\|_{L^{2}\left(\mu_{K(\gamma)}\right)}}{(\mathrm{Cap}(K(\gamma)))^n}=\frac{a_1\dots a_n}{(\mathrm{Cap}(K(\gamma)))^n}\leq Cn.$$
		\item $\mathrm{Cap}(K(\gamma))=\exp{(\sum_{k=1}^{\infty}2^{-k}\log{\gamma_k})}.$ 
		\item Let $v_{1,1}(t)=1/2-(1/2)\sqrt{1-2\gamma_1+2\gamma_1 t}$ and $v_{2,1}(t)= 1-v_{1,1}(t)$. For each $n>1$, let $v_{1,n}(t)=\sqrt{1-2\gamma_n+2\gamma_n t}$ and $v_{2,n}(t)=-v_{1,n}(t)$. Then the zero set of $Q_{2^s}\left(\cdot;\mu_{K(\gamma)}\right)$ is $\{v_{i_1,1}\circ\dots \circ v_{i_s,s}(0)\}_{i_s\in\{1,2\}}$ for all $s\in\mathbb{N}$.
		\item $\mathrm{supp}(\mu_{K(\gamma)})=\mathrm{ess\, supp}(\mu_{K(\gamma)})=K(\gamma)$. If $K(\gamma)=[0,1]\setminus\cup_{k=1}^\infty (c_i,d_i)$ where $c_i\neq d_j$ for all $i,j\in\mathbb{N}$ then $\mu_{K(\gamma)}([0,e_i])\subset \{m2^{-n}\}_{m,n\in\mathbb{N}}$ where $e_i\in(c_i,d_i)$. Moreover for each $m\in\mathbb{N}$ and $n\in\mathbb{N}$ with $m2^{-n}<1$ there is an $i\in\mathbb{N}$ such that  $\mu_{K(\gamma)}([0,e_i])=m2^{-n}$.
	\end{enumerate}
\end{theorem}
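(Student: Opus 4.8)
The plan is the following. Theorem~\ref{bigtheorem} collects facts proved in \cite{alpgon3,alpgon,alpgon2,alpgonhat,gonc,g1}, so the task is to assemble those arguments; I organize it by the mechanism behind each group of items and flag where the real difficulty sits.

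\emph{The algebraic items (c), (g), (h).} All three rest on the polynomial preimage description $E_s=F_s^{-1}([-1,1])$. For (c) it suffices that $[-1,1]$ is completely invariant under the Chebyshev map $z\mapsto 2z^2-1$, so $\tilde f_s^{-1}([-1,1])=[-1,1]$ for $s>k$ and $\tilde F_n^{-1}([-1,1])$ stabilizes at $\tilde F_k^{-1}([-1,1])=F_k^{-1}([-1,1])=E_k$. For (g) I would track the leading coefficient $\ell_s$ of $F_s$: from $f_1(z)=(2/\gamma_1)z^2+\cdots$ and $f_s(z)=(1/(2\gamma_s))z^2+\cdots$ one gets $\ell_s=\ell_{s-1}^2/(2\gamma_s)$, which with $r_0=1$ and $r_s=\gamma_s r_{s-1}^2$ gives $\ell_s=2/r_s$; the classical formula $\mathrm{Cap}(P^{-1}([-1,1]))=(\mathrm{Cap}([-1,1])/|\ell|)^{1/\deg P}$ for a polynomial $P$ with leading coefficient $\ell$ then yields $\mathrm{Cap}(E_s)=(r_s/4)^{1/2^s}$, and since $E_s\downarrow K(\gamma)$ with $K(\gamma)$ non-polar, continuity of capacity along decreasing compacta gives $\mathrm{Cap}(K(\gamma))=\lim_s(r_s/4)^{1/2^s}=\exp\big(\sum_{k\ge1}2^{-k}\log\gamma_k\big)$, since $2^{-s}\log r_s=\sum_{k=1}^s 2^{-k}\log\gamma_k$. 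For (h) the key input is that $K(\gamma)=F_s^{-1}(K_s)$ is a full polynomial preimage of a compact set $K_s\subset[-1,1]$; by the standard orthogonality of preimage polynomials, $Q_{2^s}(\cdot;\mu_{K(\gamma)})$ is a scalar multiple of $F_s$, hence its zero set is $F_s^{-1}(\{0\})$, and solving $f_n(z)=t$ gives $f_n^{-1}(t)=\{v_{1,n}(t),v_{2,n}(t)\}$, so unwinding $F_s=f_s\circ\cdots\circ f_1$ produces exactly $\{v_{i_1,1}\circ\cdots\circ v_{i_s,s}(0)\}$.

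\emph{The measure-theoretic items (a), (b), (i).} A direct computation gives $f_s^{-1}([-1,1])=[-1,-\sqrt{\varepsilon_s}]\cup[\sqrt{\varepsilon_s},1]$, and pulling back by $F_{s-1}$ shows that at step $s$ every interval of $E_{s-1}$ loses a fraction of its length comparable, with a universal constant, to $\sqrt{\varepsilon_s}$; hence $|E_s|$ telescopes into a product whose $s$-th factor is $\asymp 1-c\sqrt{\varepsilon_s}$, so $\sum_s\sqrt{\varepsilon_s}=\infty$ forces $|K(\gamma)|=0$. Under $\gamma_s\le 1/6$ we have $\sqrt{\varepsilon_s}\ge 1/\sqrt{3}$, so that sum diverges and $|K(\gamma)|=0$; then $\mu_{K(\gamma)}$ is singular (it is carried by a Lebesgue-null set) and atomless ($K(\gamma)$ is non-polar without isolated points), while $\liminf a_n=0$ follows from \eqref{norm1} together with the sub-product structure in \eqref{rec1}--\eqref{rec3}, as in Lemma~4.6 and Theorem~4.7 of \cite{alpgon}. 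For (a) one estimates instead $\sum_j|I_{s,j}|^d$ over the intervals $I_{s,j}$ of $E_s$ under $\sum_s\gamma_s<\infty$, $\gamma_s\le 1/32$, and shows it tends to $0$ for each $d>0$. For (i): since $K(\gamma)=F_n^{-1}(K_n)$ and $F_n$ is $2^n$-to-one on $K(\gamma)$ with $(F_n)_*\mu_{K(\gamma)}=\mu_{K_n}$, each of the $2^n$ basic intervals of $E_n$ carries $\mu_{K(\gamma)}$-mass $2^{-n}$; as $\mu_{K(\gamma)}$ gives no mass to the gaps, $\mu_{K(\gamma)}([0,e_i])=\mu_{K(\gamma)}([0,c_i])$ is a finite sum of such masses, hence lies in $\{m2^{-n}\}$, and conversely every $m2^{-n}<1$ is attained at the right endpoint of the union of the first $m$ basic intervals of $E_n$, which is a left gap-endpoint of $K(\gamma)$.

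\emph{The analytic items (d), (e), (f) --- the main obstacle.} For (d), the decisive point is the endpoint $0$: since $F_s(0)=1$ and $|F_s'(0)|=2\prod_{k=1}^s\gamma_k^{-1}$, for $t<0$ near $0$ one gets $G_{E_s}(t)=2^{-s}G_{[-1,1]}(F_s(t))\asymp\big(\prod_{k=1}^s(2\sqrt{\gamma_k})^{-1}\big)\sqrt{|t|}$, and $\prod_{k\ge1}(2\sqrt{\gamma_k})^{-1}<\infty$ precisely when $\sum_k\varepsilon_k<\infty$; the content of (d) is that this endpoint is the bottleneck for the global modulus of $G_{K(\gamma)}$, which is the delicate part. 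For (e), the critical points of $G_{K(\gamma)}$ lie in the gaps, and a critical point first created at level $s$ has $G_{K(\gamma)}$-value $\asymp 2^{-s}\sqrt{\varepsilon_s}$ (the deeper levels contributing only a controlled factor); since $2^{s-1}$ of them are created at level $s$, the Parreau--Widom sum is $\asymp\sum_s 2^{s-1}\cdot 2^{-s}\sqrt{\varepsilon_s}\asymp\sum_s\sqrt{\varepsilon_s}$. Finally (f) follows from (d) by a standard comparison of $W_n^2(\mu_{K(\gamma)})$ with the Chebyshev numbers of $K(\gamma)$, which grow at most linearly once $G_{K(\gamma)}$ is $1/2$-Hölder. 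Making the estimates behind (d) and (e) uniform through the infinite composition $f_s\circ\cdots\circ f_1$ is the technically demanding part, and for that I would invoke the computations of \cite{alpgon2,alpgonhat} rather than redo the bookkeeping.
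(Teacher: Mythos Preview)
The paper itself provides no proof of Theorem~\ref{bigtheorem}: it simply records, in the sentence preceding the statement, that parts (a)--(i) are proved in \cite{alpgon3}, \cite{alpgon}, \cite{alpgon2}, \cite{alpgonhat}, \cite{gonc}, and \cite{g1}, and leaves it at that. Your proposal therefore goes further than the paper does, and the sketches you give are correct outlines of the arguments in those references; in particular the computations for (c), (g), (h) are accurate, the counting in (e) (roughly $2^{s-1}$ new critical points at level $s$ with Green value $\asymp 2^{-s}\sqrt{\varepsilon_s}$) is the right heuristic, and your endpoint analysis for (d) at $0$ captures the mechanism.

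Two places where your sketch is thinner than the actual proofs deserve a flag. First, in (b) your telescoping estimate ``each interval of $E_{s-1}$ loses a fraction $\asymp\sqrt{\varepsilon_s}$ of its length'' implicitly uses a bounded-distortion property of $F_{s-1}$ on each basic interval; under the hypothesis $\gamma_s\le 1/6$ one can bypass this with a cruder argument, but as stated your length estimate presumes more than you have justified. Second, for (f) the passage ``Chebyshev numbers grow at most linearly once $G_{K(\gamma)}$ is $1/2$-H\"older'' is not a standard off-the-shelf fact; the argument in \cite{alpgonhat} is specific to the structure of $K(\gamma)$ and uses the explicit product form of the $a_n$'s rather than a black-box H\"older-to-Widom implication. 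You acknowledge deferring to the citations for (d)--(f), which is exactly what the paper does, so this is not a defect in your proposal so much as a reminder that the linear bound in (f) is not a corollary of (d) alone.
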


We consider $4$ different models depending on $\gamma$ in the whole article. They are:
	\begin{enumerate}
		\item $\gamma_s= 1/4- (1/(50+s)^4).$
		\item $\gamma_s= 1/4- (1/(50+s)^2).$
		\item $\gamma_s= 1/4- (1/(50+s)^{(5/4)}.$
		\item $\gamma_s= 1/4- (1/50).$
\end{enumerate}
Model $1$ represents an example where $K(\gamma)$ is Parreau-Widom and Model $2$ gives a non Parreau-Widom set with fast growth of $\gamma$. Model $3$ produces a non Parreau-Widom $K(\gamma)$ with relatively slow growth of $\gamma$ but still $G_{K(\gamma)}$ is optimally smooth. Model $4$ yields a set which is neither Parreau-Widom nor the Green's function for the complement of it is optimally smooth. We used Matlab in all of the experiments.

If $f$ is a nonlinear polynomial having real coefficients with real and simple zeros $x_1<x_2<\ldots<x_n$ and distinct extremas $y_1<\ldots<y_{n-1}$ where $|f(y_i)|>1$ for $i=1,2, \ldots, n-1$, we say that $f$ is an \emph{admissible} polynomial. Clearly, for any choice of $\gamma$, $f_n$ is admissible for each $n\in\mathbb{N}$ and this implies by Lemma 4.3 in \cite{alpgon2} that $F_n$ is also admissible. By the remark after Theorem 4 and Theorem 11 in \cite{van assche} it follows that the Christoffel numbers (see p. 565 in \cite{van assche} for the definition) for the $2^n$-th orthogonal polynomial of $\mu_{E_n}$ are equal to $1/2^n$. Let $\mu^n_{K(\gamma)}$ be the measure which assigns $1/2^n$ mass to each zero of $Q_{2^n}(\cdot;\mu_{K(\gamma)})$. From Remark 4.8 in \cite{alpgon} the recurrence coefficients $(a_k)_{k=1}^{2^n-1}$, $(b_k)_{k=1}^{2^n}$ for $\mu_{E_n}$ are exactly those of $\mu_{K(\gamma)}$. This implies that (see e.g. Theorem 1.3.5 in \cite{Sim3}) the Christoffel numbers corresponding to $2^n$-th orthogonal polynomial for $\mu_{K(\gamma)}$ are also equal to $1/2^n$. 
\begin{figure}
	\centering
	\includegraphics[scale=.6]{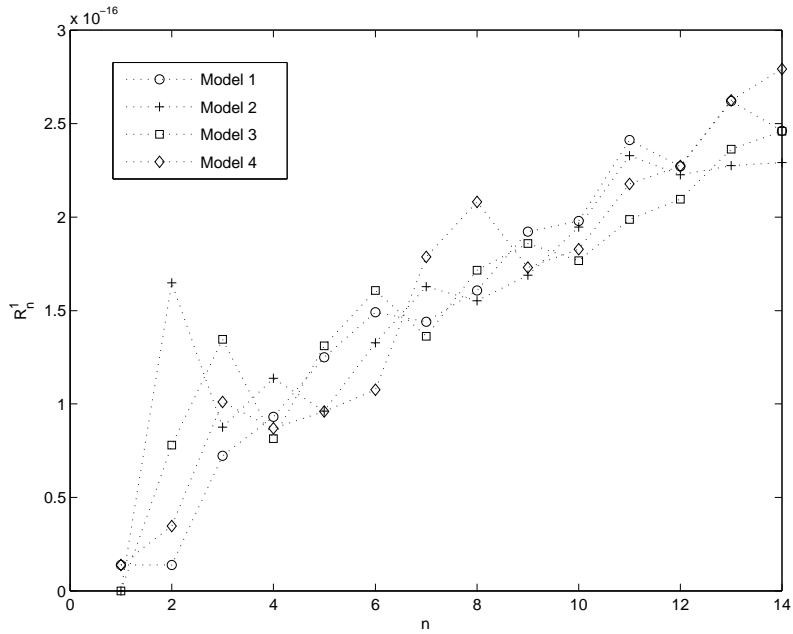}
	\caption{Errors associated with eigenvalues.}
	\label{fig:fig1}
\end{figure} 
\begin{figure}[!htb]
	\centering
	\includegraphics[scale=.5]{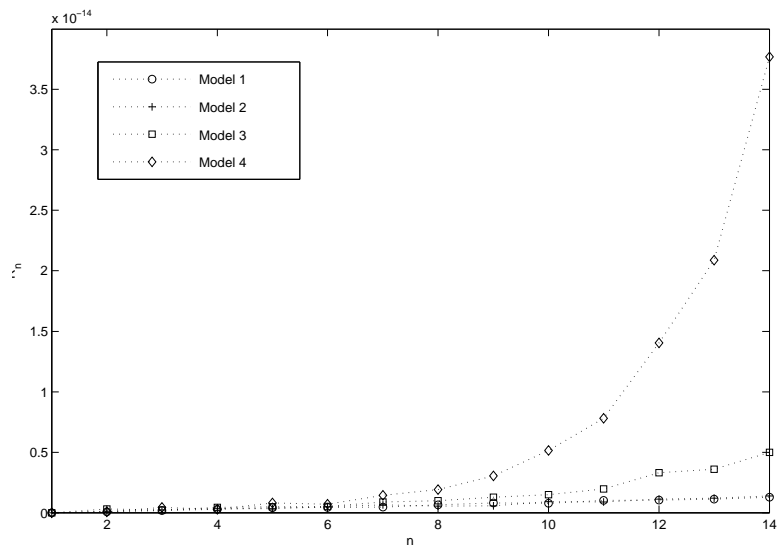}
	\caption{Errors associated with eigenvectors.}
	\label{fig:fig2}
\end{figure}

Let 
\begin{equation}
H_{\mu^n_{K(\gamma)}}=\left( \begin{array}{ccccc}
b_1 & a_1 \\
a_1 & b_2 & a_2 \\
& a_2 & \ddots & \ddots \\
& & \ddots & \ddots & a_{2^n-1} \\
& & & a_{2^n-1} & b_{2^n}

\end{array} \right),
\end{equation}

where the coefficients $(a_k)_{k=1}^{2^n-1}$, $(b_k)_{k=1}^{2^n}$ are the Jacobi parameters for $\mu_{K(\gamma)}$. Then the set of eigenvalues of $H_{\mu^n_{K(\gamma)}}$ is exactly the zero set of $Q_{2^n}\left(\cdot;\mu_{K(\gamma)}\right)$. Moreover, by \cite{golub}, the square of first component of normalized eigenvectors gives one of the Christoffel numbers, which in our case is equal to $1/2^n$. For each $n\in\{1,\dots,14\}$, using gauss.m, we computed the eigenvalues and first component of normalized eigenvectors of $H_{\mu^n_{K(\gamma)}}$ where the coefficients are obtained from Algorithm 1. We compared these values with the zeros obtained by part $(h)$ of Theorem \ref{bigtheorem} and $1/2^n$ respectively. For each $n$, let $\{t_k^n\}_{k=1}^{2^n}$ be the set of eigenvalues for $H_{\mu^n_{K(\gamma)}}$ and $\{q_k^n\}_{k=1}^{2^n}$ be the set of zeros where we enumerate these sets so that the smaller the index they have, the value will be smaller. Let $\{w_k^n\}_{k=1}^{2^n}$ be the set of squared first component of normalized eigenvectors. We plotted (see Figure ~\ref{fig:fig1} and Figure ~\ref{fig:fig2}) $R_n^1:=(1/2^n)(\sum_{k=1}^{2^n}|t_k^n-q_k^n|)$ and $R_n^2:=(1/2^n)(\sum_{k=1}^{2^n}|(1/2^n)-w_k^n|)$. This numerical experiment shows the reliability of Algorithm 1. One can compare these values with Fig. 2 in \cite{mant4}.

\section{Recurrence Coefficients}
\begin{figure}[!htb]
	\centering
	\includegraphics[scale=.6]{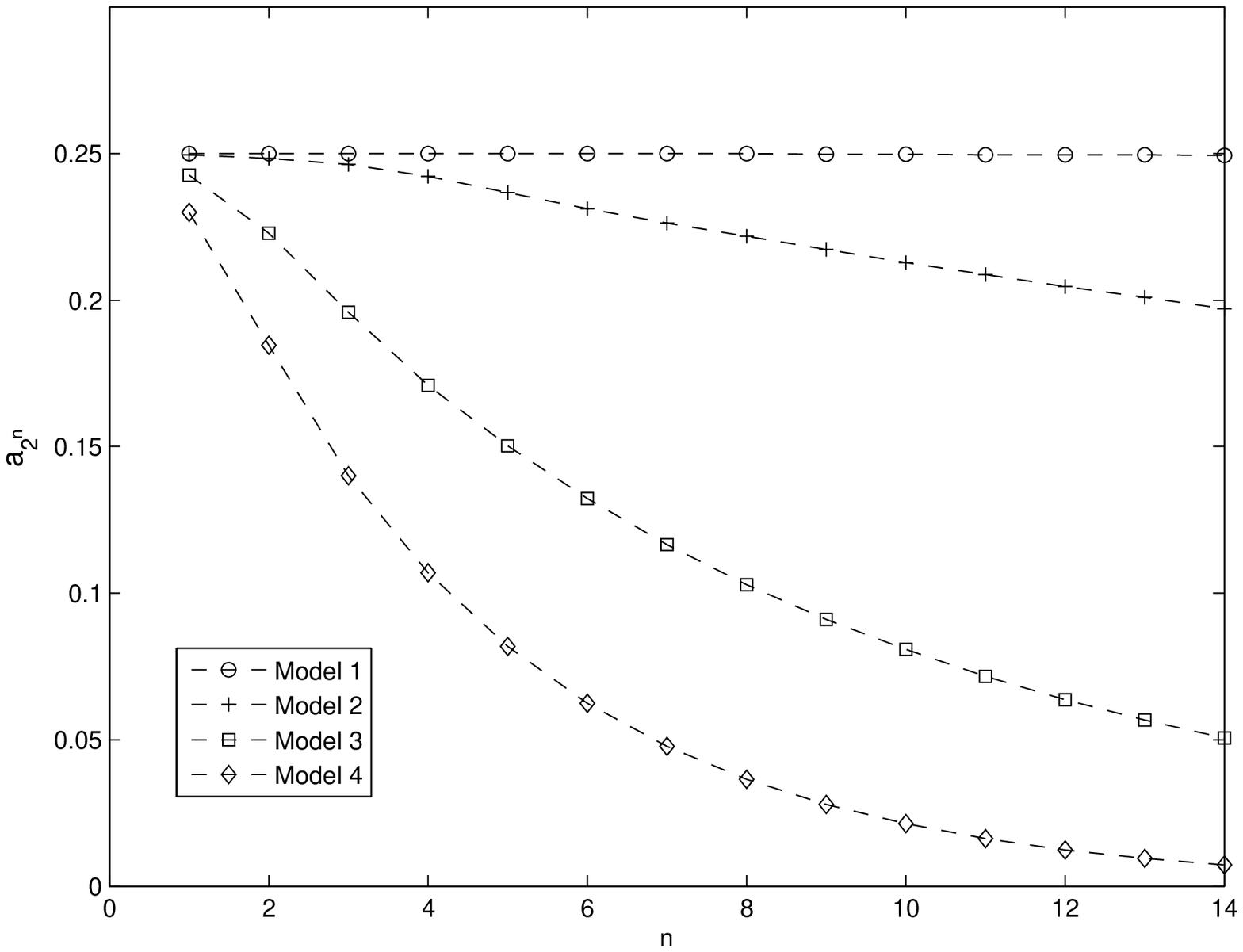}
	\caption{The values of outdiagonal elements of Jacobi matrices at the indices of the form $2^s$.}
	\label{fig:fig3}
\end{figure}
\begin{figure}[!htb]
	\centering
	\includegraphics[scale=.6]{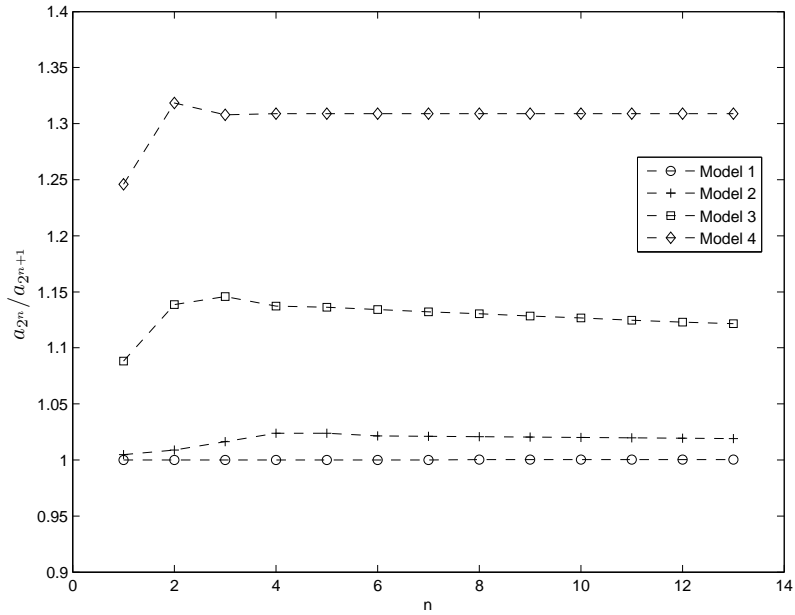}
	\caption{The ratios of outdiagonal elements of Jacobi matrices at the indices of the form $2^s$.}
	\label{fig:fig4}
\end{figure}
\begin{figure}[!htb]
	\centering
	\includegraphics[scale=.6]{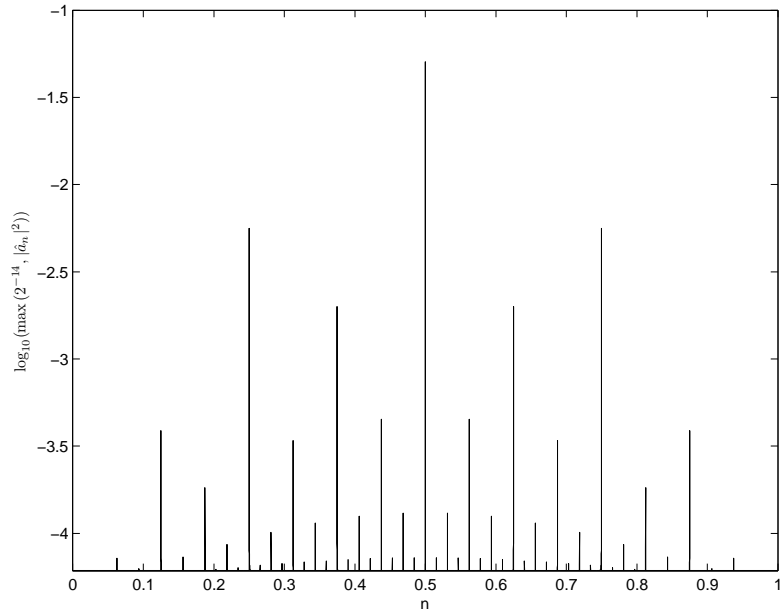}
	\caption{Normalized power spectrum of the $a_n$'s for Model 1.}
	\label{fig:fig5}
\end{figure}

It was shown (for the stretched version of this set but similar arguments are valid for this case also) in \cite{alpgon2} that $K(\gamma)$ is a generalized polynomial Julia set (see e.g. \cite{Bruck1, Bruck,Buger} for a discussion on generalized Julia sets) if $\inf \gamma_k>0$, that is $K(\gamma):=\partial \{z\in\overbar{\mathbb{C}}: F_n(z)\rightarrow \infty \mbox{ locally uniformly}\}$. Let $J(f)$ be the (autonomous) Julia set for $f(z)=z^2-c$ for some $c>2$. Since $(f_n)_{n=1}^\infty$ is a sequence of quadratic polynomials, it is natural to ask that to what extent  $H_{\mu_{J(f)}}$ and $H_{\mu_{K(\gamma)}}$ have similar behavior. Compare for example Theorem 4.7 in \cite{alpgon} with Section 3 in \cite{besger}.

The recurrence coefficients for $\mu_{J(f)}$ can be ordered according to their indices, see (IV.136)-(IV.138) in \cite{bes2}. We obtain similar results for $\mu_{K(\gamma)}$ in our numerical experiments in each 4 models. That is the numerical experiments suggest that $\displaystyle\min_{i\in\{1,\dots, 2^n\}}{a_i}=a_{2^{n}}$ for $n\leq 14$ and it immediately follows from \eqref{a1} and \eqref{rec3} that $\displaystyle \max_{n\in\mathbb{N}} a_n =a_1$. Thus, we make the following conjecture:

\begin{Conjecture}\label{conj1}
	For $\mu_{K(\gamma)}$ we have $\displaystyle\min_{i\in\{1,\dots, 2^n\}}{a_i}=a_{2^{n}}$ and in particular  $\displaystyle \,\,\liminf_{s\rightarrow\infty}a_{2^s}=\liminf_{n\rightarrow\infty}a_{n}.$
\end{Conjecture}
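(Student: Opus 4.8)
\medskip
\noindent\textbf{Towards a proof.} The ``in particular'' clause is a formal consequence of the first equality, so the real content is to show $a_i\ge a_{2^n}$ for every $i\le 2^n$. Indeed, if $\min_{i\le 2^n}a_i=a_{2^n}$ holds for all $n$, then $a_{2^{n+1}}=\min_{i\le 2^{n+1}}a_i\le\min_{i\le 2^n}a_i=a_{2^n}$, so $(a_{2^n})_n$ is non-increasing and converges to some $\ell$; since $\min_{i\le 2^n}a_i\downarrow\inf_i a_i$ and this partial minimum equals $a_{2^n}$, we get $\ell=\inf_n a_{2^n}=\inf_i a_i$, whence $\ell\le\liminf_n a_n\le\lim_s a_{2^s}=\ell$, i.e.\ $\liminf_s a_{2^s}=\liminf_n a_n$. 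The opposite fact $\max_n a_n=a_1$ is already recorded above: \eqref{a1} and \eqref{rec3} together give $a_{2k}^2+a_{2k+1}^2=a_1^2$ for all $k\ge1$.

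For the minimum, I would first repackage the recursions. Using $a_1\cdots a_m=\|Q_m(\cdot;\mu_{K(\gamma)})\|_{L^2(\mu_{K(\gamma)})}$ and, for a dyadic block $I^{(s)}_{j}:=\{2^s j+1,\dots,2^s(j+1)\}$, writing $\pi(I^{(s)}_{j}):=\prod_{i\in I^{(s)}_{j}}a_i^2$, formulas \eqref{norm1}--\eqref{rec3} are equivalent to: (i) $\pi(I^{(s)}_{0})=\|Q_{2^s}\|^2_{L^2}=(1-2\gamma_{s+1})r_s^2/4=:q_s$; (ii) the \emph{sum rule} $\pi(I^{(s)}_{2k-1})+\pi(I^{(s)}_{2k})=q_s$ for all $s\ge0$ and $k\ge1$ (this is exactly \eqref{rec3} when $s=0$ and \eqref{rec2} when $s\ge1$, after matching the index ranges), which is the genuine structural input; and (iii) the tautological \emph{nesting rule} $\pi(I^{(s)}_{j})=\pi(I^{(s-1)}_{2j})\,\pi(I^{(s-1)}_{2j+1})$, which just re-encodes \eqref{rec1}. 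From $0<\gamma_s<1/4$ one reads off $q_s/q_{s-1}=\frac{1-2\gamma_{s+1}}{1-2\gamma_s}\gamma_s^2 r_{s-1}^2<1/8$ and $q_s/q_{s-1}<q_{s-1}$, so the $q_s$ decay super-exponentially. I would then prove $\min_{i\le 2^n}a_i=a_{2^n}$ by induction on $n$, with the inductive step powered by the expected self-similarity onto the shifted Cantor set $K(\gamma^{(1)})$, $\gamma^{(1)}:=(\gamma_{s+1})_{s=1}^\infty$ (which is again admissible). Comparing the closed form \eqref{norm1} for $\gamma$ and for $\gamma^{(1)}$, one checks the renormalisation identity $\pi(I^{(1)}_{m-1})=a_{2m-1}^2 a_{2m}^2=\gamma_1^2\,(a'_m)^2$ for all $m\ge1$ (immediate for small $m$), where $(a'_m)$ are the recurrence coefficients of $\mu_{K(\gamma^{(1)})}$. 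Combined with $a_{2m}^2+a_{2m+1}^2=a_1^2$ and $a_1^2=q_0$, this reconstructs the entire sequence $(a_i)$ from $(a'_m)$ and $\gamma_1$, so the conjecture for $K(\gamma)$ should reduce to the conjecture for $K(\gamma^{(1)}),K(\gamma^{(2)}),\dots$.

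The hard part has two facets. First, proving the identity $a_{2m-1}^2 a_{2m}^2=\gamma_1^2(a'_m)^2$ in general --- presumably through a degree-two functional equation relating the orthogonal polynomials of $\mu_{K(\gamma)}$ at degree $2m$ to those of $\mu_{K(\gamma^{(1)})}$ at degree $m$ via the map $f_1$; no such exact self-similarity of the Jacobi parameters appears in the literature, and I expect this is the actual content of the conjecture. Second, even granting it the descent does not close with the bare statement: since $a_{2m}^2=\gamma_1^2(a'_m)^2/a_{2m-1}^2$ and $a_{2m-1}^2=a_1^2-a_{2m-2}^2$, comparing $a_{2m}$ with $a_{2^n}$ forces one to simultaneously control the even-indexed subsequence $(a_{2k})_k$, so the inductive hypothesis must be strengthened (e.g.\ to also assert that $\min_{k\le 2^{n-1}}a_{2k}$ is attained at $k=2^{n-1}$). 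Identifying the right strengthened invariant, and then verifying the resulting polynomial inequalities in the $\gamma_s$ --- which degenerate to \emph{equalities} as $\gamma_s\uparrow 1/4$, where $K(\gamma)\to[0,1]$ and $a_n^2\to 1/16$ for $n\ge2$, so that crude bounds such as $\gamma_s<1/4$ or $1-2\gamma_s>1/2$ are insufficient --- is where the difficulty concentrates. One could instead work purely inside the combinatorial framework above, peeling each block $I^{(n-1)}_{1}=\{2^{n-1}+1,\dots,2^n\}$ by the sum and nesting rules and estimating directly, but the bookkeeping is heavier and the same near-boundary estimates resurface.
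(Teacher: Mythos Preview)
This statement is a \emph{conjecture} in the paper, not a theorem: the authors offer no proof at all, only numerical evidence from the four models for $n\le 14$. So there is no ``paper's own proof'' against which to compare your proposal. You have correctly recognised this and labelled your write-up ``Towards a proof,'' not as a proof.

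Your formal deduction of the ``in particular'' clause from the main assertion is clean and correct, as is the observation that $a_{2k}^2+a_{2k+1}^2=a_1^2$ from \eqref{a1} and \eqref{rec3}. The block-product reformulation via $\pi(I^{(s)}_j)$ and the sum rule $\pi(I^{(s)}_{2k-1})+\pi(I^{(s)}_{2k})=q_s$ is an accurate and useful rewriting of \eqref{rec2}--\eqref{rec3}. Where your sketch remains genuinely incomplete --- and you say so yourself --- is (i) the putative renormalisation identity $a_{2m-1}^2 a_{2m}^2=\gamma_1^2(a'_m)^2$ linking the Jacobi parameters of $K(\gamma)$ and $K(\gamma^{(1)})$, which is not established anywhere in the paper or the cited literature, and (ii) the inductive closure, which, as you note, would require a strengthened hypothesis and delicate inequalities that degenerate as $\gamma_s\uparrow 1/4$. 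These are exactly the obstructions that leave the statement a conjecture; your outline does not overcome them, but it also does not pretend to.
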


A non-polar compact set $K\subset\mathbb{R}$ which is regular with respect to the Dirichlet problem is called a Parreau-Widom set if $\sum_{k=1}^\infty G_K(e_k)<\infty$ where $\cup_k e_k$ is the set of critical points, which is at most countable, of $G_K$. Parreau-Widom sets have positive Lebesgue measure. It is also known that (see e.g. Remark 4.8 in \cite{alpgon}) $\liminf a_n>0$ for ${\mu_K}$ provided that $K$ is Parreau-Widom. For more on Parreau-Widom sets, we refer the reader to \cite{christiansen,yuditskii}.

By part $(e)$ of Theorem \ref{bigtheorem}, $\liminf a_n>0$ for $\mu_{K(\gamma)}$ provided that $\sum_{s=1}^\infty \sqrt{\varepsilon_s}<\infty $. It also follows from Remark 4.8 in \cite{alpgon} and \cite{dombrowski} that if the $a_n$'s associated with $\mu_{K(\gamma)}$ satisfy $\liminf a_n=0$ then $K(\gamma)$ has zero Lebesgue measure. Hence asymptotic behavior of the $a_n$'s is also important for understanding the Hausdorff dimension of $K(\gamma)$. We computed $v_n: =a_{2^{n}}/a_{2^{n+1}}$ (see Figure ~\ref{fig:fig3} and Figure ~\ref{fig:fig4}) for $n=1,\dots, 13$ in order to find for which $\gamma$'s $\liminf a_n=0$. We assume here Conjecture \ref{conj1} is correct.

In Model 1, $v_n$ is very close to $1$ which is expected since for this case $\liminf a_n>0$. In other models, it seems that $(v_n)_{n=1}^{13}$ seems to behave like a constant. Thus, this experiment may be read as unless $\sum_{s=1}^\infty \sqrt{\varepsilon_s}<\infty$ is satisfied $\liminf a_n=0$. So, we conjecture:

\begin{Conjecture}\label{conj2}
For a given $\gamma=(\gamma_k)_{k=1}^\infty$, let $\varepsilon_k:= 1-4\gamma_k$ for each $k\in\mathbb{N}$. Then $K(\gamma)$ is of positive Lebesgue measure if and only if $\sum_{s=1}^\infty \sqrt{\varepsilon_s}<\infty$ if and only if $\liminf a_n>0$.
\end{Conjecture}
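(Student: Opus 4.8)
\emph{A strategy for Conjecture \ref{conj2}.} Write (A) $|K(\gamma)|>0$, (B) $\sum_s\sqrt{\varepsilon_s}<\infty$, (C) $\liminf a_n>0$. Three of the implications are already available: (B)$\Rightarrow$(A) because a Parreau--Widom set has positive Lebesgue measure and (B) is equivalent to the Parreau--Widom property by Theorem \ref{bigtheorem}(e); (B)$\Rightarrow$(C) is exactly the consequence of Theorem \ref{bigtheorem}(e) recorded above; and (A)$\Rightarrow$(C) is the contrapositive of ``$\liminf a_n=0\Rightarrow|K(\gamma)|=0$'' quoted above. To close the cycle it suffices to add (C)$\Rightarrow$(A) and (A)$\Rightarrow$(B). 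The first is soft: by the Dombrowski-type bound underlying the reference used above, the essential support of any bounded spectral measure satisfies $|\mathrm{ess\,supp}(\mu)|\ge 4\liminf a_n$, and $\mathrm{ess\,supp}(\mu_{K(\gamma)})=K(\gamma)$ by Theorem \ref{bigtheorem}(i), so $|K(\gamma)|\ge 4\liminf a_n$. Thus the whole statement reduces to the single implication
\[
\sum_{s=1}^{\infty}\sqrt{\varepsilon_s}=\infty\quad\Longrightarrow\quad |K(\gamma)|=0. \tag{I}
\]

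To prove (I) I would first pass to an even model. The affine map $\psi(u)=(1-u)/2$ satisfies $f_1(\psi(u))=\phi_1(u)$ with $\phi_1(u)=u^2/(2\gamma_1)+1-1/(2\gamma_1)$, and $f_n=\phi_n$ (an even quadratic of the same form) for $n>1$, so $F_n\circ\psi=\Phi_n:=\phi_n\circ\cdots\circ\phi_1$. Hence $K(\gamma)=\psi(\widetilde K(\gamma))$, where $\widetilde K(\gamma)=\bigcap_n\widetilde E_n$ and $\widetilde E_n=\Phi_n^{-1}([-1,1])$, and $|K(\gamma)|=\tfrac12|\widetilde K(\gamma)|$; moreover $\widetilde K(\gamma)$ is symmetric about $0$ and satisfies $\widetilde K(\gamma)=\phi_1^{-1}(\widetilde K(\gamma'))$ with $\gamma'=(\gamma_2,\gamma_3,\dots)$. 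Let $\widetilde\rho_m$ be the density of the push-forward $(\Phi_m)_*(\mathrm{Leb}|_{\widetilde E_m})$ on $[-1,1]$ and put $\varrho_m(w)=\widetilde\rho_m(w)+\widetilde\rho_m(-w)$. A one-step computation with the two inverse branches of $\phi_m$ gives, with $q_m(t)=\sqrt{1-2\gamma_m+2\gamma_m t}$,
\[
\widetilde\rho_m(t)=\frac{\gamma_m}{q_m(t)}\,\varrho_{m-1}\!\bigl(q_m(t)\bigr),\qquad \widetilde\rho_1(t)=\frac{2\gamma_1}{\sqrt{1-2\gamma_1+2\gamma_1 t}},
\]
and the substitution $w=q_n(t)$ in $\int_{-1}^1\widetilde\rho_n$ yields the telescoping identities $|\widetilde E_n|=\int_{\sqrt{\varepsilon_n}}^{1}\varrho_{n-1}(w)\,dw$ and $|\widetilde E_{n-1}|=\int_{0}^{1}\varrho_{n-1}(w)\,dw$, hence $|\widetilde E_{n-1}|-|\widetilde E_n|=\int_0^{\sqrt{\varepsilon_n}}\varrho_{n-1}(w)\,dw$.

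Now (I) follows once one shows that there is an absolute constant $c>0$ with
\[
\int_0^{\sqrt{\varepsilon_n}}\varrho_{n-1}(w)\,dw\ \ge\ c\,\sqrt{\varepsilon_n}\int_0^1\varrho_{n-1}(w)\,dw\qquad(n\ge2), \tag{$\star$}
\]
since then $|\widetilde E_n|\le(1-c\sqrt{\varepsilon_n})|\widetilde E_{n-1}|$, so $|\widetilde K(\gamma)|\le|\widetilde E_1|\prod_{n\ge2}(1-c\sqrt{\varepsilon_n})$, which vanishes precisely when $\sum\sqrt{\varepsilon_n}=\infty$. For $n=2$ this is elementary: $\varrho_1$ is increasing on $[0,1]$, so the left side is $\ge\sqrt{\varepsilon_2}\,\varrho_1(0)=4\gamma_1\sqrt{\varepsilon_2}/\sqrt{1-2\gamma_1}$, while $\int_0^1\varrho_1=2(1-\sqrt{\varepsilon_1})\le 8\gamma_1$, giving $(\star)$ with $c=\tfrac12$. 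In general, using $\int_0^1\varrho_{n-1}=|\widetilde E_{n-1}|$ and the factorisation $\widetilde\rho_n(t)=\frac{\gamma_n}{q_n(t)}\varrho_{n-1}(q_n(t))$, the estimate $(\star)$ reduces (modulo near-monotonicity of $\varrho_{n-1}$ at the origin) to a \emph{uniform non-concentration bound}: $\varrho_{n-1}(w)\ge c_0\,|\widetilde E_{n-1}|$ for $w$ in a fixed neighbourhood of $0$, with $c_0>0$ independent of $n$ and of $\gamma$.

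I would establish this by induction on $n$ via the recursion for $\varrho_m$. The idea is that essentially all the oscillation of $\varrho_m$ on $[0,1]$ is concentrated in a boundary layer near $w=1$, where $\widetilde\rho_m$ inherits a $(1-w)^{-1/2}$-type growth from the outer endpoints of the intervals of $\widetilde E_m$; away from that layer the maps $q_k^{\pm}$ occurring in the unfolded formula for $\varrho_{n-1}$ have bounded distortion, so the push-forward of Lebesgue measure stays comparable to Lebesgue measure there, which forces a definite fraction of the total mass $|\widetilde E_{n-1}|$ onto any fixed-proportion subinterval around $0$. The step I expect to be the main obstacle is making the accounting of this boundary-layer contribution uniform in $\gamma$ --- in particular when infinitely many $\gamma_k$ are close to $1/4$ (so $\varepsilon_k\approx0$ and the layer is a genuine integrable singularity) while infinitely many others are small; everything else is either a direct computation or an appeal to the results cited above.
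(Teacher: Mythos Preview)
The paper does not prove this statement; it is presented explicitly as a \emph{conjecture} motivated by numerical evidence (the computations of $v_n=a_{2^n}/a_{2^{n+1}}$ in Section~3), so there is no proof in the paper to compare yours against.

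Your reduction is well organized: the even model, the recursion $\widetilde\rho_m(t)=\frac{\gamma_m}{q_m(t)}\varrho_{m-1}(q_m(t))$, and the telescoping identity $|\widetilde E_{n-1}|-|\widetilde E_n|=\int_0^{\sqrt{\varepsilon_n}}\varrho_{n-1}$ are all correct, and the $n=2$ check of $(\star)$ goes through. But two genuine gaps remain. First, your argument for (C)$\Rightarrow$(A) is wrong as stated: the inequality $|\mathrm{ess\,supp}(\mu)|\ge 4\liminf a_n$ fails for general bounded Jacobi matrices---the critical almost Mathieu operator has $a_n\equiv1$ yet spectrum of Lebesgue measure zero---so this step needs an argument specific to $K(\gamma)$, and you supply none. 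Without it, even a complete proof of (I) would yield only (A)$\Leftrightarrow$(B)$\Rightarrow$(C), leaving (C)$\Rightarrow$(B) open. Second, and more seriously, the inductive step for $(\star)$ is only a heuristic. The assertion that $\varrho_{n-1}(w)\ge c_0|\widetilde E_{n-1}|$ for $w$ near $0$, uniformly in $n$ and in $\gamma$, is exactly the substance of the conjecture: unfolding the recursion, $\varrho_{n-1}(0)$ is a sum over $2^{n-1}$ branches of reciprocals of products $\prod_k|\phi_k'|$, and controlling these products uniformly when some $\gamma_k$ are near $1/4$ (so the relevant $q_k$ are near $0$ and derivatives blow up) while others are small is not a bounded-distortion situation in any standard sense. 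Your own closing paragraph acknowledges this as the main obstacle; until it is overcome, the proposal is a promising framework rather than a proof.
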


A more interesting problem is whether $H_{\mu_{K(\gamma)}}$ is almost periodic or at least asymptotically almost periodic. Since $(b_n)_{n=1}^\infty$ is a periodic sequence, we only need to deal with $(a_n)_{n=1}^\infty$.

For a measure $\mu$ with an infinite compact support $\mathrm{supp}(\mu)$, let $\delta_n$ be the normalized counting measure on the zeros of $Q_n(\cdot;\mu)$. If there is a $\nu$ such that $\delta_n\rightarrow \nu$ then $\nu$ is called the density of states (DOS) measure for $H_\mu$. Besides, $\int_{-\infty}^x d\nu$ is called the integrated density of states (IDS). For $H_{\mu_{K(\gamma)}}$ the density of states measure is automatically (see Theorem 1.7 and Theorem 1.12 in \cite{Sim3} and also \cite{widom}) $\mu_{K(\gamma)}$. Therefore, if $x$ is chosen from one of the gaps (by a gap of a compact set on $K\subset\mathbb{R}$ we mean a bounded component of $R\setminus K$) of $\mathrm{supp}\left(\mu_{K(\gamma)}\right)$, that is $x\in (c_i,d_i)$ (see part (i) of Theorem \ref{bigtheorem}) then the value of the IDS is equal to $m2^{-n}$ which does not exceed $1$ and also for each $m,n\in\mathbb{N}$ with $m2^{-n}<1$  there is a gap $(c_j,d_j)$ such that the IDS takes the value $m2^{-n}$.

For an almost periodic sequence $c=(c_n)_{n=1}^\infty$ the $\mathbb{Z}$-module of the real numbers modulo $1$ generated by $\omega$ satisfying $$ \{\omega:\, \lim_{n\rightarrow\infty} \frac{1}{N} \sum_{n=1}^{N} \exp{(2\pi in\omega)} c_n\neq 0\}$$ is called the frequency module for $c$ and it is denoted by $\mathcal{M}(c)$. The frequency module is always countable and $c$ can be written as a uniform limit of Fourier series where the frequencies are chosen among $\mathcal{M}(c)$. For an almost periodic Jacobi matrix $H$ with coefficients $a=(a_n)_{n=1}^\infty$ and $b=(b_n)_{n=1}^\infty$, the frequency module $\mathcal{M}(H)$ is the module generated by $\mathcal{M}(a)$ and $\mathcal{M}(b)$. It was shown in Theorem III.1 in \cite{delyon} that for an almost periodic $H$, the values of IDS in gaps belong to $\mathcal{M}(H)$. Moreover, (see e.g. Theorem 2.4 in \cite{gerhar}), an asymptotically almost periodic Jacobi matrix has the same density of states measure with the almost periodic limit of it.

In order to examine almost periodicity of the $a_n$'s for $\mu_{K(\gamma)}$ we computed the discrete Fourier transform $(\widehat{a}_n)_{n=1}^{2^{14}}$ for the first $2^{14}$ coefficients for each model where frequencies run from $0$ to $1$. We normalized $|\widehat{a}|^2$ dividing it by $\sum_{n=1}^{2^{14}} |\widehat{a}_n|^2$. We plotted (see Figure ~\ref{fig:fig5}) this normalized power spectrum while we did not plot the peak at $0$, by detrending the transform.

There are only a small number of peaks in each case compared to $2^{14}$ frequencies which points out almost periodicity of coefficients. We consider only Model 1 here although we have similar pictures for the other models. The highest 10 peaks are at $0.5, 0.25,  0.75, 0.375, 0.625,$ $0.4375, 0.5625, 0.125, 0.875, 
0.3125$. All these values are of the form $m2^{-n}$ where $n\leq4$. This is an important indicator of almost periodicity as these frequencies are exactly the values of IDS for $H_{\mu_{K(\gamma)}}$ in the gaps which appear earlier in the construction of the Cantor set. The following conjecture follows naturally from the above discussion.

\begin{Conjecture}\label{conj3}
For any $\gamma$, $(a_n)_{n=1}^\infty$ for $H_{\mu_{K(\gamma)}}$ is asymptotically almost periodic where the almost periodic limit has frequency module equal to $\{m2^{-n}\}_{m,n\in\{N_0\}}$ modulo 1.
\end{Conjecture}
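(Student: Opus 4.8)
The natural strategy is to approximate $\mu_{K(\gamma)}$ by the equilibrium measures $\mu_{E_k}$ of the polynomial preimages $E_k=F_k^{-1}([-1,1])$. Since $F_k$ is admissible of degree $2^k$ and $E_k$ has exactly $2^k$ disjoint bands, $E_k$ is the spectrum of a periodic Jacobi matrix of period $2^k$ (equivalently, $2F_k$ is a discriminant), and since the equilibrium measure of a finite-gap set is reflectionless on it, $\mu_{E_k}$ lies on the associated isospectral torus and hence has recurrence coefficients $(\alpha^{(k)}_n)_{n\ge1}$ that are eventually periodic of period $2^k$ (orthogonal polynomials via polynomial mappings; see \cite{van assche} and the references therein on periodic and finite-gap Jacobi matrices). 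By Remark~4.8 in \cite{alpgon}, $\alpha^{(k)}_n=a_n$ for $1\le n\le 2^k-1$, and the norms $\|Q_{2^s}(\cdot;\mu_{E_k})\|$ entering \eqref{rec1}--\eqref{rec3} are given by \eqref{norm1} with the tail parameters $\gamma_{k+1},\gamma_{k+2},\dots$ replaced by $1/4$; in particular they coincide with the genuine ones at every level $s\le k$ and differ at level $k+1$ only by a relative factor of size $O(\varepsilon_{k+1})$.

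The decisive point is to upgrade this to uniform control, i.e.\ to show that the periodic sequences $(\alpha^{(k)}_n)_n$ form a Cauchy sequence in $l^\infty(\mathbb{N})$. Since $\alpha^{(k)}$ and $\alpha^{(k+1)}$ already agree on $\{1,\dots,2^k-1\}$, it suffices to bound $|\alpha^{(k)}_n-\alpha^{(k+1)}_n|$ for $2^k\le n\le 2^{k+1}$, where by \eqref{a1} and \eqref{rec1}--\eqref{rec3} the two period blocks are explicit rational-and-radical expressions in the lower-order $a_j$'s (the same for both) and in a bounded number of norms $\|Q_{2^s}\|$ that, by the previous paragraph, differ only at the single level $s=k+1$ and only by a relative amount $O(\varepsilon_{k+1})$. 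Propagating this one-level perturbation through the recursions should give $\sup_n|\alpha^{(k)}_n-\alpha^{(k+1)}_n|\le C\,\omega_k$ with $\sum_k\omega_k<\infty$ for every admissible $\gamma$; granting this, $\alpha^{(\infty)}_n:=\lim_k\alpha^{(k)}_n$ exists uniformly, is a uniform limit of periodic sequences hence almost periodic, and for fixed $n$ the choice of $k$ with $2^k-1\ge n$ gives $|a_n-\alpha^{(\infty)}_n|=|\alpha^{(k)}_n-\alpha^{(\infty)}_n|\le\sum_{j\ge k}\omega_j\to0$, so $(a_n)$ is asymptotically almost periodic with almost periodic limit $(\alpha^{(\infty)}_n)$.

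For the frequency module, each $(\alpha^{(k)}_n)$ has period $2^k$, so its frequency module lies in $\{m2^{-k}\bmod 1\}$, and passing to the uniform limit gives $\mathcal{M}\big((\alpha^{(\infty)}_n)\big)\subseteq\{m2^{-n}:m,n\in\mathbb{N}_0\}$ modulo $1$. The reverse inclusion uses only facts already recorded in Section~3: the density of states of $H_{\mu_{K(\gamma)}}$ is $\mu_{K(\gamma)}$; an asymptotically almost periodic Jacobi matrix has the same integrated density of states as its almost periodic limit (Theorem~2.4 in \cite{gerhar}); the values of the IDS in the gaps belong to the frequency module (Theorem~III.1 in \cite{delyon}); and by part~$(i)$ of Theorem~\ref{bigtheorem} these values run over all dyadic rationals in $(0,1)$. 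Hence $\mathcal{M}$ contains every $2^{-n}$ and therefore equals $\{m2^{-n}:m,n\in\mathbb{N}_0\}$ modulo $1$.

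The main obstacle is the uniform Cauchy estimate. The recursions \eqref{rec1}--\eqref{rec3} are genuinely recursive and involve quotients and square roots of differences of products of the $a_j$'s, so a clean uniform bound on how a level-$(k+1)$ perturbation propagates into $\alpha^{(k)}_n$ for $n$ as large as $2^{k+1}$ --- valid for \emph{every} admissible $\gamma$, in particular in the regime where $\liminf a_n=0$ and $(\varepsilon_s)$ is neither summable nor small (e.g.\ Model~4) --- is delicate; in that regime the crude approximants $\alpha^{(k)}$ may fail to be Cauchy in $l^\infty$, and one would have to replace them by a finer family of almost periodic approximants. When $\sum_s\sqrt{\varepsilon_s}<\infty$ (the Parreau--Widom case) this step is covered by the Sodin--Yuditskii machinery for reflectionless Jacobi matrices on Parreau--Widom spectra (\cite{peher,sodin}), and when $\gamma_s\equiv\gamma$ by the Julia-set results of \cite{Barnsley4}; the new content lies in between. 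A more intrinsic alternative would exploit the renormalization relation between $H_{\mu_{K(\gamma)}}$ and $H_{\mu_{K(\sigma\gamma)}}$, where $\sigma$ is the shift on parameter sequences, realizing $(a_n(\mu_{K(\gamma)}))$ as an infinite composition of explicit length-two ``doubling'' maps in the spirit of \cite{Barnsley4,bes2} and deducing both the convergence above and almost periodicity from uniform contractivity of these maps --- the dyadic frequency module reflecting that the underlying substitution has length two --- but this meets essentially the same difficulty.
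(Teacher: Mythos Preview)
The paper does not prove this statement: it is labeled a \emph{Conjecture} and is supported only by numerical evidence (the power spectrum of Figure~\ref{fig:fig5} together with the gap--labelling discussion of the IDS immediately preceding it). There is no proof in the paper to compare your attempt against.

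Your outline is a natural line of attack, and you are candid that the decisive step --- the uniform $l^\infty$-Cauchy estimate $\sup_n|\alpha^{(k)}_n-\alpha^{(k+1)}_n|\le C\,\omega_k$ with $\sum_k\omega_k<\infty$ --- is missing. That gap is not a technicality but essentially the entire content of the conjecture. Propagating a single level-$(k{+}1)$ perturbation through \eqref{rec1}--\eqref{rec3} requires uniform lower bounds on the products appearing in the denominators and on the differences under the square roots in \eqref{rec2}; in the non--Parreau--Widom regime, where one expects $\liminf a_n=0$ (Conjecture~\ref{conj2}), no such bounds are available, and for constant $\gamma$ (Model~4) the input perturbation $\varepsilon_{k+1}$ does not even tend to zero. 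Your own assessment that the scheme is complete only in the cases already covered by \cite{peher,sodin} and \cite{Barnsley4} is accurate.

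There is also an internal inconsistency. You state that $(\alpha^{(k)}_n)_n$ is \emph{eventually} periodic but then conclude that $\alpha^{(\infty)}$ is almost periodic because it is a ``uniform limit of periodic sequences''; and since $\alpha^{(k)}_n=a_n$ for $n\le 2^k-1$, your argument in fact gives $a_n=\alpha^{(\infty)}_n$ for every $n$, i.e.\ that $(a_n)$ itself is almost periodic rather than merely asymptotically so. The paper explicitly doubts this in the Parreau--Widom case (see the discussion in Section~5), which is another sign that the $l^\infty$-Cauchy estimate cannot hold as stated. To obtain only asymptotic almost periodicity one would have to work with the periodic \emph{tails} of the $\alpha^{(k)}$, but then the identification with the initial segment of $(a_n)$ is lost and a separate convergence argument for $a_n-\alpha^{(\infty)}_n\to 0$ is required.
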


\section{Widom factors} 
\begin{figure}[!htb]
	\centering
	\includegraphics[scale=.6]{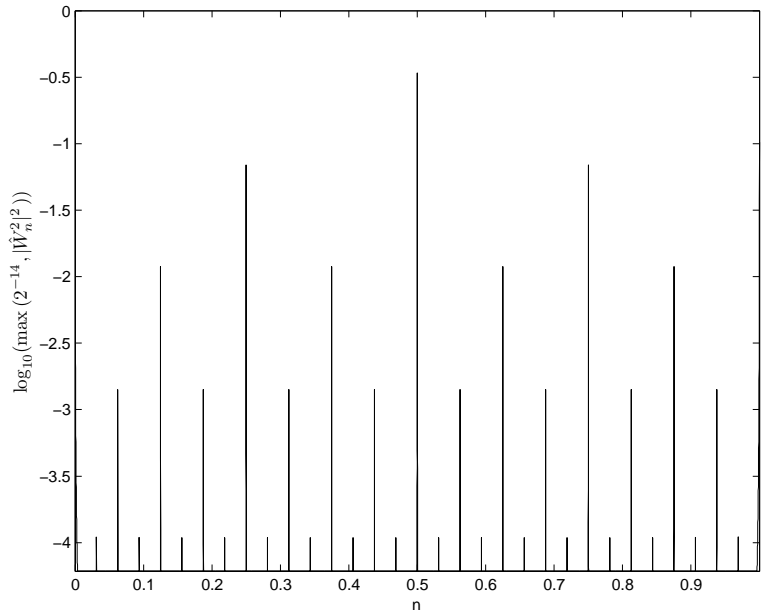}
	\caption{Normalized power spectrum of the $W_n^2\left(\mu_{K(\gamma)}\right)$'s for Model 1.}
	\label{fig:fig6}
\end{figure}
\begin{figure}[!htb]
	\centering
	\includegraphics[scale=.6]{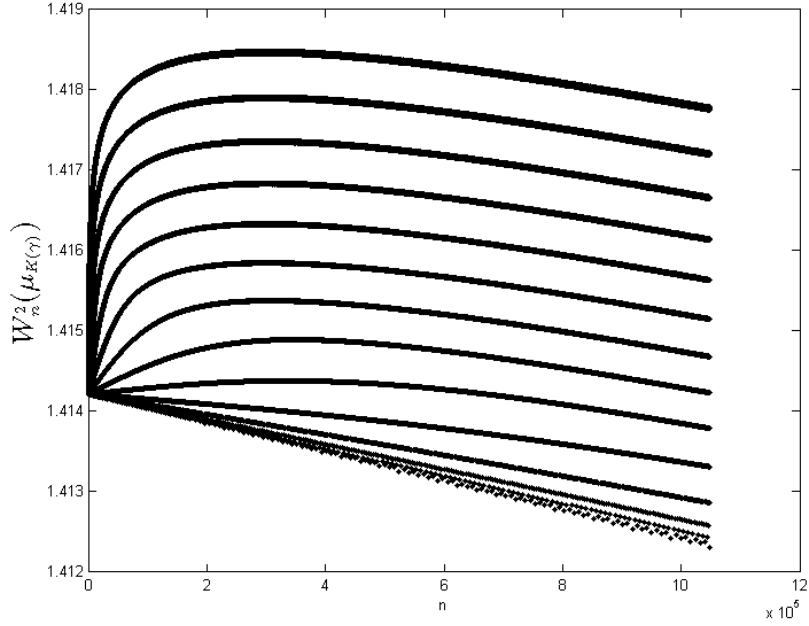}
	\caption{Widom-Hilbert factors for Model 1}
	\label{fig:fig7}
\end{figure}

Let $K\subset\mathbb{C}$ be a non-polar compact set. Then the unique monic polynomial $T_n$ of degree $n$ satisfying $$\|T_n\|_{{L^\infty}(K)}=\min\{\|P_n\|_{{L^\infty}(K)}: \mbox{$P_n$ complex monic polynomial of degree $n$}\}$$ is called the $n$-th \emph{Chebyshev polynomial} on $K$ where $\|\cdot \|_{{L^\infty}(K)}$ is the sup-norm on $K$. 

We define the $n$-th Widom factor for the sup-norm on $K$ by  $W_n(K)=||T_n||_{{L^\infty}(K)}/(\mathrm{Cap}(K))^n$. It is due to Schiefermayr \cite{sif} that $W_n(K)\geq 2$ if $K\subset \mathbb{R}$. It is also known that (see e.g. \cite{fekete,szego}) $\|T_n\|_{{L^\infty}(K)}^{1/n}\rightarrow \mathrm{Cap}(K)$ as $n\rightarrow\infty$. This implies a theoretical constraint on the growth rate of $W_n(K)$, that is $(1/n)\log{W_n(K)}\rightarrow 0$ as $n\rightarrow\infty$.
See for example \cite{tot11,tot2,tot1} for further discussion.

Theorem 4.4 in \cite{gonchat} says that for each sequence $(M_n)_{n=1}^\infty$ satisfying $\lim_{n\rightarrow\infty}(1/n)\log{M_n}=0$, there is a $\gamma$ such that $W_n(K(\gamma))>M_n$. On the other hand, for many compact subsets of $\mathbb{C}$ (see e.g. \cite{andr,csz,totikvar,widom2}) the sequence of Widom factors for the sup-norm is bounded. In particular, this is valid for Parreau-Widom sets on $\mathbb{R}$, see \cite{csz}. It would be interesting to find (if any) a non Parreau-Widom set $K$ on $\mathbb{R}$ such that it is regular with respect to the Dirichlet problem and $(W_n(K))_{n=1}^\infty$ is bounded. Note that if $K$ is a non-polar compact subset of $\mathbb{R}$ which is regular with respect to the Dirichlet problem then by Theorem 4.2.3 in \cite{Ransford} and Theorem 5.5.13 in \cite{Sim3} we have $\mathrm{supp}(\mu_K)= K$. In this case, we have $W_n^2(\mu_K)\leq W_n(K)$ since $\|Q_n(\cdot;\mu_K)\|_{L^2(\mu_K)}\leq \|T_n\|_{L^2(\mu_K)}\leq \|T_n\|_{{L^\infty}(K)}$. Therefore, it is possible to formulate the above problem in a weaker form: Is there a non Parreau-Widom set $K\subset\mathbb{R}$ which is regular with respect to the Dirichlet problem such that $(W_n^2(\mu_K))_{n=1}^\infty$ is bounded?

In \cite{alpgon4}, the authors following \cite{Barnsley3} studied  $\left(W_n^2\left(\mu_{J(f)}\right)\right)_{n=1}^\infty$ where $f(z)=z^3-\lambda z$ for $\lambda>3$ and showed that the sequence is unbounded. For this particular case the Julia set is a compact subset of $\mathbb{R}$ which has zero Lebesgue measure. It is always true for a polynomial autonomous Julia set $J(f)$ on $\mathbb{R}$ that $\mathrm{supp}\left(\mu_{J(f)}\right)=J(f)$ since $J(f)$ is regular with respct to the Dirichlet problem by \cite{Mane}. Now, let us show that $\left(W_n^2\left(\mu_{J(f)}\right)\right)_{n=1}^\infty$ is unbounded when $f(z)=z^2-c$ and $c>2$. These quadratic Julia sets are zero Lebesgue measure Cantor sets on $\mathbb{R}$ and therefore not Parreau-Widom. See \cite{Brolin} for a deeper discussion on this particular family.

\begin{theorem}\label{teo}
	Let $f(z)=z^2-c$ for $c\geq 2$. Then $\left(W_n^2\left(\mu_{J(f)}\right)\right)_{n=1}^\infty$ is bounded if and only if $c=2$.
\end{theorem}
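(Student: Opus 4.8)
The plan is to work with the recurrence coefficients $(a_n)_{n\ge 1}$ of $\mu:=\mu_{J(f)}$. Since $J(f)$ is regular for the Dirichlet problem (\cite{Mane}) we have $\mathrm{supp}(\mu)=J(f)$, and for any monic quadratic $\mathrm{Cap}(J(f))=1$ (\cite{Brolin}); hence $W_n^2(\mu)=\|Q_n(\cdot;\mu)\|_{L^2(\mu)}=a_1\cdots a_n$. If $c=2$ then $J(f)=[-2,2]$, whose equilibrium measure satisfies $a_1=\sqrt 2$ and $a_n=1$ for $n\ge 2$, so $W_n^2(\mu)=\sqrt 2$ for all $n\ge 1$ and the sequence is bounded (one could also simply note that $[-2,2]$ is Parreau--Widom and invoke \cite{csz}). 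So the whole point is the case $c>2$, and there the plan is to prove $a_{2^m}\to 0$ as $m\to\infty$; this will suffice because $\|Q_n\|_{L^2(\mu)}=a_n\|Q_{n-1}\|_{L^2(\mu)}$, together with the computation below, gives $W_{2^m-1}^2(\mu)=\sqrt c/a_{2^m}$.

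Assume $c>2$. First I would exploit the self-similarity of $\mu$. As $f$ is even, $J(f)$ is symmetric about $0$, so $\mu$ is symmetric, $b_n\equiv 0$, and the $Q_n(\cdot;\mu)$ have definite parity; also $\mu$ is the balanced measure, i.e.\ $f_*\mu=\mu$ (\cite{Brolin}). An elementary orthogonality check — split a polynomial of degree $<2n$ into its even part, which is a polynomial in $f(z)=z^2-c$ of degree $<n$, and its odd part, which integrates to zero against the symmetric $\mu$ — shows $Q_{2n}(z;\mu)=Q_n(f(z);\mu)$ for all $n$, and the same identity holds for the orthonormal polynomials $q_n(\cdot;\mu)$ since the leading coefficients agree ($\|Q_{2n}\|_{L^2(\mu)}=\|Q_n\|_{L^2(\mu)}$ by invariance). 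Iterating, $Q_{2^m}(\cdot;\mu)=f^{\circ m}$, so by $f$-invariance $\|Q_{2^m}(\cdot;\mu)\|_{L^2(\mu)}^2=\int (f^{\circ m})^2\,d\mu=\int z^2\,d\mu=c$. Expanding $(z^2-c)\,q_{2n}(z;\mu)$ in two ways — once via $q_{2n}=q_n\circ f$ and the three-term recurrence for $(q_k)$, once by applying that recurrence twice in the variable $z$ — and matching coefficients produces, for all $n\ge 1$,
\[
a_{2n-1}a_{2n}=a_n ,\qquad a_{2n}^2+a_{2n+1}^2=c ,
\]
together with $a_1=\sqrt c$ and $a_2=1$.

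The crucial step, which I expect to be the only nonroutine point, is the following claim: for $c>2$, $a_k\le 1$ whenever $k\ge 2$ is even, and $a_k\ge\sqrt{c-1}$ whenever $k\ge 3$ is odd. I would prove it by strong induction on $k$. The odd case is immediate from $a_k^2=c-a_{k-1}^2$ and $a_{k-1}\le 1$. For even $k=2j$, write $a_{2j}=a_j/a_{2j-1}$ and note $a_{2j-1}\ge\sqrt{c-1}$ because $2j-1<k$ is odd; if $j$ is even this already gives $a_{2j}\le a_j/\sqrt{c-1}\le 1$, while if $j=2m+1$ is odd one must still check $a_j\le a_{2j-1}$. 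Using $a_j^2=c-a_{2m}^2$ and $a_{2j-1}^2=c-a_{4m}^2$, this reduces to $a_{4m}\le a_{2m}$, i.e.\ to $a_{4m-1}\ge 1$, which holds since $4m-1<k$ is odd. Making this reduction — turning the even-index estimate at $k$ into an odd-index estimate at $4m-1<k$ — is the step that makes the induction close, and the one I would treat most carefully. Granting the claim, $a_{2^m-1}\ge\sqrt{c-1}$ and $a_{2^m-1}a_{2^m}=a_{2^{m-1}}$ give $a_{2^m}\le a_{2^{m-1}}/\sqrt{c-1}$, hence $a_{2^m}\le (c-1)^{-(m-1)/2}\to 0$ as $m\to\infty$ because $c>2$; therefore $W_{2^m-1}^2(\mu)=\sqrt c/a_{2^m}\to\infty$, so $(W_n^2(\mu_{J(f)}))_{n=1}^\infty$ is unbounded. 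Combined with the case $c=2$, this yields the stated equivalence.
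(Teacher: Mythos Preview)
Your proof is correct and follows the same route as the paper: establish $\|Q_{2^m}\|_{L^2(\mu)}=\sqrt{c}$ and $a_{2^m}\to 0$ for $c>2$, then conclude $W_{2^m-1}^2(\mu)=\sqrt{c}/a_{2^m}\to\infty$. The only difference is that the paper imports $Q_{2^{n+1}}=Q_{2^n}^2-c$ from \cite{Barnsley1} and $\lim a_{2^n}=0$ from \cite{bes2}, whereas you supply a self-contained derivation via the recursions $a_{2n-1}a_{2n}=a_n$, $a_{2n}^2+a_{2n+1}^2=c$ and your parity induction, which even yields the explicit decay rate $a_{2^m}\le (c-1)^{-(m-1)/2}$.
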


\begin{proof}
	If $c=2$ then $J(f)=[-2,2]$. This implies that $\left(W_n^2\left(\mu_{J(f)}\right)\right)_{n=1}^\infty$ is bounded since $J(f)$ is Parreau-Widom.
	
	Let $c\neq 2$. Then $\lim_{n\rightarrow\infty}a_{2^n}=0$ (see e.g. Section IV.5.2 in \cite{bes2}) where the $a_n$'s are the recurrence coefficients for $\mu_{J(f)}$ and $\mathrm{Cap}(J(f))=1$ by \cite{Brolin}. Since $Q_{2^{n+1}}\left(\cdot;\mu_{J(f)}\right)=Q_{2^n}^2\left(\cdot;\mu_{J(f)}\right) -c$ by Theorem 3 in \cite{Barnsley1}, we have $W_{2^n}^2\left(\mu_{J(f)}\right)=\|Q_{2^n}\left(\cdot;\mu_{J(f)}\right)\|_{L^2\left({\mu_{J(f)}}\right)}=\sqrt{c}$ for all $n\geq 1$. Moreover,
	\begin{equation}
	 W_{2^n-1}^2\left(\mu_{J(f)}\right)= \frac{W_{2^n}^2\left(\mu_{J(f)}\right)}{a_{2^n}}=\frac{\sqrt{c}}{a_{2^n}}.
	\end{equation} 
	Hence $\lim_{n\rightarrow\infty}W_{2^n-1}^2\left(\mu_{J(f)}\right)=\infty$ as $\lim_{n\rightarrow\infty}a_{2^n}=0$. This completes the proof.
\end{proof}

In \cite{alpgon}, it was shown that $\left(W_n^2\left(\mu_{K(\gamma)}\right)\right)_{n=1}^\infty$ is unbounded if $\gamma_k\leq 1/6$ for all $k\in\mathbb{N}$. We want to examine the behavior of $\left(W_n^2\left(\mu_{K(\gamma)}\right)\right)_{n=1}^\infty$ provided that $K(\gamma)$ is not Parreau-Widom. By \cite{alpgon}, $\left(W_{2^n}\left(\mu_{K(\gamma)}\right)\right)\geq \sqrt{2}$ for all $n\in\mathbb{N}_0$ for any choice of $\gamma$. Hence,we also have 
\begin{equation}\label{eqeq}
W_{2^n-1}^2\left(\mu_{K(\gamma)}\right)=W_{2^n}^2\left(\mu_{K(\gamma)}\right)\frac{\mathrm{Cap}\left(\mu_{K(\gamma)}\right)}{a_{2^n}}\geq \frac{\sqrt{2}\mathrm{Cap}\left(\mu_{K(\gamma)}\right)}{a_{2^n}}
\end{equation} 
for all $n\in\mathbb{N}$. 

If we assume that Conjecture \ref{conj1} and Conjecture \ref{conj2} are correct then $\liminf_{n\rightarrow\infty}a_{2^n}=0$ as soon as $K(\gamma)$ is not Parreau-Widom. If $\liminf_{n\rightarrow\infty}a_{2^n}=0$ then $\limsup_{n\rightarrow\infty}W_{2^n-1}\left(\mu_{K(\gamma)}\right)=\infty$ by \eqref{eqeq}. Thus, the numerical experiments indicate the following:

\begin{Conjecture}\label{conj4}
$K(\gamma)$ is a Parreau-Widom set if and only if $\left(W_n^2\left(\mu_{K(\gamma)}\right)\right)_{n=1}^\infty$ is bounded if and only if $(W_n(K(\gamma)))_{n=1}^\infty$ is bounded.
\end{Conjecture}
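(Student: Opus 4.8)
The plan is to organise the claimed three-way equivalence as a short cycle of implications and then to isolate the one genuinely hard arrow. Writing (1), (2), (3) for the conditions ``$K(\gamma)$ is Parreau--Widom'', ``$(W_n^2(\mu_{K(\gamma)}))_{n}$ is bounded'', and ``$(W_n(K(\gamma)))_{n}$ is bounded'', two of the implications are essentially free. For (1)$\Rightarrow$(3) I would invoke \cite{csz}, which asserts that the sup-norm Widom factors of a Parreau--Widom subset of $\mathbb{R}$ form a bounded sequence. For (3)$\Rightarrow$(2) I would use the elementary chain $\|Q_n(\cdot;\mu_{K(\gamma)})\|_{L^2(\mu_{K(\gamma)})}\le \|T_n\|_{L^2(\mu_{K(\gamma)})}\le \|T_n\|_{L^\infty(K(\gamma))}$, valid because $Q_n$ minimises the $L^2(\mu_{K(\gamma)})$-norm among monic polynomials of its degree and $\mu_{K(\gamma)}$ is a probability measure on $K(\gamma)$; dividing by $\mathrm{Cap}(K(\gamma))^n$ gives $W_n^2(\mu_{K(\gamma)})\le W_n(K(\gamma))$, exactly as recorded before the statement. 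Thus the whole conjecture reduces to the single implication (2)$\Rightarrow$(1).

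For (2)$\Rightarrow$(1) I would argue by contraposition: assuming $K(\gamma)$ is \emph{not} Parreau--Widom, I would show $(W_n^2(\mu_{K(\gamma)}))_n$ is unbounded. The lever is already present in \eqref{eqeq}: since $W_{2^n}^2(\mu_{K(\gamma)})\ge\sqrt2$ for every $n$ by \cite{alpgon}, we have $W_{2^n-1}^2(\mu_{K(\gamma)})\ge \sqrt2\,\mathrm{Cap}(K(\gamma))/a_{2^n}$, so it suffices to prove that $\liminf_n a_{2^n}=0$ whenever $\sum_s\sqrt{\varepsilon_s}=\infty$, i.e. whenever the Parreau--Widom criterion of part (e) of Theorem \ref{bigtheorem} fails. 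This is precisely the mechanism that drove Theorem \ref{teo} in the autonomous Julia-set case, where $a_{2^n}\to0$ forced $W_{2^n-1}^2\to\infty$; the present task is to establish the analogous decay of the subdiagonal along powers of $2$ directly from the parameters $\gamma$.

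To attack $\liminf_n a_{2^n}=0$ I would work with the exact block data the construction supplies. The leading-coefficient identity $\prod_{k=1}^{2^s}a_k=\|Q_{2^s}(\cdot;\mu_{K(\gamma)})\|_{L^2(\mu_{K(\gamma)})}=\tfrac12\sqrt{1-2\gamma_{s+1}}\,r_s$ from \eqref{norm1}, combined with $\mathrm{Cap}(K(\gamma))=\exp(\sum_k 2^{-k}\log\gamma_k)$ from part (g), yields a closed form for $W_{2^s}^2(\mu_{K(\gamma)})$; writing $\gamma_k=(1-\varepsilon_k)/4$ turns the tail $r_s/\mathrm{Cap}(K(\gamma))^{2^s}$ into $4\exp(\sum_{j\ge1}2^{-j}(\varepsilon_{s+j}+O(\varepsilon_{s+j}^2)))$, which is bounded in $s$. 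Hence the factors at the indices $2^s$ themselves never blow up, and all the growth must be squeezed into the single ratio $a_{2^s}=\mathrm{Cap}(K(\gamma))\,W_{2^s}^2(\mu_{K(\gamma)})/W_{2^s-1}^2(\mu_{K(\gamma)})$. The plan is therefore to bound the smallest recurrence coefficient in each dyadic block in terms of $\varepsilon_s$, using the ordering predicted by Conjecture \ref{conj1} (so that $\min_{i\le 2^n}a_i=a_{2^n}$ identifies the relevant term) together with the interval-contraction estimates behind Lemma 4.6 and Theorem 4.7 of \cite{alpgon}, and to show that a divergent $\sum_s\sqrt{\varepsilon_s}$ forces infinitely many blocks in which $a_{2^s}$ drops below any prescribed threshold.

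The main obstacle is exactly this last step: proving $\liminf_n a_{2^n}=0$ at the sharp threshold $\sum_s\sqrt{\varepsilon_s}=\infty$. Decay of $a_{2^n}$ is currently available only under the far cruder hypothesis $\gamma_s\le1/6$, i.e. $\varepsilon_s\ge1/3$ for all $s$ (part (b) of Theorem \ref{bigtheorem}), and bridging the gap from that uniform condition down to the mere summability of $\sqrt{\varepsilon_s}$ is the heart of the difficulty; note that the exponent $1/2$ is precisely what separates the Parreau--Widom criterion $\sum\sqrt{\varepsilon_s}<\infty$ in part (e) from the weaker optimal-smoothness criterion $\sum\varepsilon_s<\infty$ in parts (d) and (f), so any viable argument must be sensitive to square roots rather than to $\varepsilon_s$ itself. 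This is why the statement stands as a conjecture: the arrows (1)$\Rightarrow$(3)$\Rightarrow$(2) are unconditional, whereas (2)$\Rightarrow$(1) presently rests on the unproven Conjectures \ref{conj1} and \ref{conj2}, and a self-contained proof would require a quantitative lower bound on the growth of $W_{2^n-1}^2(\mu_{K(\gamma)})$ driven directly by $\sum_s\sqrt{\varepsilon_s}$.
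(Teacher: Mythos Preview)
Your analysis is correct and matches the paper's own treatment precisely: the statement is a \emph{conjecture}, not a theorem, and the paper offers no proof but only the heuristic chain you reproduce --- (1)$\Rightarrow$(3) from \cite{csz}, (3)$\Rightarrow$(2) from the elementary norm inequality recorded before the statement, and the contrapositive of (2)$\Rightarrow$(1) reduced via \eqref{eqeq} to $\liminf a_{2^n}=0$, which the paper explicitly conditions on Conjectures~\ref{conj1} and~\ref{conj2}. You have identified the same hard arrow and the same obstruction as the paper.
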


Let $K$ be a union of finitely many compact non-degenerate intervals on $\mathbb{R}$ and $\omega$ be the Radon-Nikodym derivative of $\mu_K$ with respect to the Lebesgue meeasure on the line. Then $\mu_K$ satisfies the Szeg\H{o} condition: $\int_{K} \omega(x)\log{\omega(x)}\,dx>-\infty$. This implies by Corollary 6.7 in \cite{Chris} that $(W_n^2(\mu_K))_{n=1}^\infty$ is asymptotically almost periodic. If $K$ is a Parreau-Widom set, $\mu_K$ satisfies the Szeg\H{o} condition by \cite{pommerenke}. We plotted (see Figure ~\ref{fig:fig7}) the Widom-Hilbert factors for Model 1 for the first $2^{20}$ values and it seems that $\limsup W_n^2\left(\mu_{(K(\gamma))}\right)\neq \sup W_n^2\left(\mu_{(K(\gamma))}\right)$. For Model 1, we plotted (see Figure ~\ref{fig:fig6}) the power spectrum for $(W_n^2(\mu_K))_{n=1}^{2^{14}}$ where we normalized $|\widehat{W}^2|^2$ dividing it by $\sum_{n=1}^{2^{14}} |\widehat{W}^2_n(\mu_K)|^2$. Frequencies run from $0$ to $1$ here and we did not plot the big peak at $0$.

Clearly, there are only a few peaks as in (see Figure ~\ref{fig:fig5}) which is an important indicator of almost periodicity. The highest 10 peaks are at $0.5, 0.00006103515625, 0.25, 0.75, 0.125,$ $0.875, 0.375, 0.625, 0.062.5, 0.9375$. These values are quite different than those of peaks in Figure ~\ref{fig:fig5}. This may be an indicator of a different frequency module of the almost periodic limit. By Conjecture \ref{conj4}, $\left(W_n^2\left(\mu_{K(\gamma)}\right)\right)_{n=1}^\infty$ is unbounded and cannot be asymptotically almost periodic if $K(\gamma)$ is not Parreau-Widom. We make the following conjecture:

\begin{Conjecture}\label{conj5}
$\left(W_n^2\left(\mu_{K(\gamma)}\right)\right)_{n=1}^\infty$ is asymptotically almost periodic if and only if $K(\gamma)$ is Parreau-Widom. If $K(\gamma)$ is Parreau-Widom then the almost periodic limit's frequency module includes the module generated by $\{m2^{-n}\}_{m,n\in\{N_0\}}$ modulo 1.
\end{Conjecture}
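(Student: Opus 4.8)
\medskip
\noindent\textbf{Proof proposal.}
I would split the equivalence into its two implications and then read off the statement about the frequency module. The implication ``$K(\gamma)$ not Parreau--Widom $\Rightarrow$ $\big(W_n^2(\mu_{K(\gamma)})\big)_{n=1}^\infty$ not asymptotically almost periodic'' is the soft half: an asymptotically almost periodic sequence is bounded, so it suffices to know that $\big(W_n^2(\mu_{K(\gamma)})\big)_n$ is unbounded whenever $\sum_s\sqrt{\varepsilon_s}=\infty$, which is exactly the content of Conjecture~\ref{conj4} (equivalently, Conjectures~\ref{conj1} and~\ref{conj2} give $\liminf_n a_{2^n}=0$ in that case, whence $\limsup_n W_{2^n-1}^2(\mu_{K(\gamma)})=\infty$ by~\eqref{eqeq}).

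The substance is the converse. Assuming $\sum_s\sqrt{\varepsilon_s}<\infty$, so that $K(\gamma)$ is a Parreau--Widom set (part~(e) of Theorem~\ref{bigtheorem}), I would invoke Szeg\H{o} theory on Parreau--Widom sets (see \cite{chriss, Chris, christiansen} and the references in \cite{peher, sodin}). Its hypotheses hold here: $\mathrm{ess\,supp}(\mu_{K(\gamma)})=K(\gamma)$ by part~(i), and the equilibrium measure $\mu_{K(\gamma)}$ trivially lies in the Szeg\H{o} class of $K(\gamma)$ since $d\mu_{K(\gamma)}/d\mu_{K(\gamma)}\equiv1$ and hence $\int\log\!\big(d\mu_{K(\gamma)}/d\mu_{K(\gamma)}\big)\,d\mu_{K(\gamma)}=0>-\infty$. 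Granting in addition the Direct Cauchy Theorem for $\overbar{\mathbb{C}}\setminus K(\gamma)$ (to which I return below), the Szeg\H{o} asymptotics identify $W_n^2(\mu_{K(\gamma)})=\|Q_n(\cdot;\mu_{K(\gamma)})\|_{L^2(\mu_{K(\gamma)})}/\mathrm{Cap}(K(\gamma))^n$, up to an $o(1)$ error, with the values of a fixed continuous positive function on the (compact) isospectral torus of $K(\gamma)$ sampled along a single orbit of the minimal translation flow; such a sequence is asymptotically almost periodic, and in particular $\inf_n W_n^2(\mu_{K(\gamma)})>0$. This is the Parreau--Widom analogue of Corollary~6.7 in~\cite{Chris}. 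Concretely, one can realize it by approximating $K(\gamma)$ from outside by the finite-gap sets $E_n$: the first $2^n-1$ recurrence coefficients of $\mu_{E_n}$ are those of $\mu_{K(\gamma)}$, one has $\big(\mathrm{Cap}(K(\gamma))/\mathrm{Cap}(E_n)\big)^{2^n}\to1$ because $\varepsilon_s\to0$, Corollary~6.7 in~\cite{Chris} applies to each $\mu_{E_n}$, and $\sum_s\sqrt{\varepsilon_s}<\infty$ is precisely the summability needed to make the resulting almost periodic limits converge uniformly in $n$.

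For the frequency module, use $W_n^2(\mu_{K(\gamma)})=(a_1\cdots a_n)/\mathrm{Cap}(K(\gamma))^n$ (part~(f)), which gives $a_n=\mathrm{Cap}(K(\gamma))\,W_n^2(\mu_{K(\gamma)})/W_{n-1}^2(\mu_{K(\gamma)})$. If $\widetilde W$ denotes the almost periodic limit of $\big(W_n^2(\mu_{K(\gamma)})\big)_n$ from the previous step, then $\widetilde W$ is bounded and $\inf_n\widetilde W_n>0$, so $\widetilde a_n:=\mathrm{Cap}(K(\gamma))\,\widetilde W_n/\widetilde W_{n-1}$ is almost periodic and is the almost periodic limit of $(a_n)_n$; since the frequency module is stable under shifts, products, and division by sequences bounded away from $0$, we get $\mathcal{M}(\widetilde a)\subseteq\mathcal{M}(\widetilde W)$. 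By Conjecture~\ref{conj3} --- or directly, since $b_n\equiv\tfrac12$ forces $\mathcal{M}(H_{\mu_{K(\gamma)}})=\mathcal{M}(\widetilde a)$, the density of states of $H_{\mu_{K(\gamma)}}$ is $\mu_{K(\gamma)}$, whose integrated density of states attains every value $m2^{-n}<1$ on a gap by part~(i), and Theorem~III.1 of~\cite{delyon} places those values in the frequency module of the almost periodic limit --- the module $\mathcal{M}(\widetilde a)$ contains that generated by $\{m2^{-n}\}_{m,n\in\mathbb{N}_0}$ modulo~$1$, and the asserted inclusion follows.

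The main obstacle will be the verification of the Direct Cauchy Theorem for $\overbar{\mathbb{C}}\setminus K(\gamma)$ --- a property that genuinely fails for some Parreau--Widom sets --- together with the uniform control of the finite-gap approximation needed for the Szeg\H{o} function and the isospectral-torus description to pass to the limit as $n\to\infty$. I would attack this through the self-similar architecture of $K(\gamma)$, the explicit recursions~\eqref{norm1}--\eqref{rec3}, and the sharp bound in part~(f) of Theorem~\ref{bigtheorem}; quantitatively, $\sum_s\sqrt{\varepsilon_s}<\infty$ is the natural condition under which the harmonic measures of the gaps of $K(\gamma)$ are controlled sharply enough for the argument to close.
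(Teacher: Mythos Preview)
There is nothing to compare against: the paper does not prove this statement. Conjecture~\ref{conj5} is presented as an open conjecture, supported only by the power-spectrum plot (Figure~\ref{fig:fig6}), the Widom-factor plot (Figure~\ref{fig:fig7}), and the surrounding discussion; no proof is offered or claimed.

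Your proposal is a coherent roadmap, and you are candid that it is conditional --- but precisely for that reason it is not a proof. The ``soft'' direction (not Parreau--Widom $\Rightarrow$ unbounded $\Rightarrow$ not asymptotically almost periodic) rests entirely on Conjecture~\ref{conj4}, equivalently on Conjectures~\ref{conj1} and~\ref{conj2} via~\eqref{eqeq}; those are themselves open, backed only by the numerics of Section~3. For the converse you correctly isolate the bottleneck: the Szeg\H{o}/Christiansen--Simon--Zinchenko machinery you invoke requires the Direct Cauchy Theorem on $\overbar{\mathbb{C}}\setminus K(\gamma)$, and DCT genuinely fails for some Parreau--Widom sets (cf.\ \cite{yuditskii}). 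Nothing in the paper, and nothing in your sketch beyond the ``self-similar architecture'' heuristic, establishes DCT here; the finite-gap approximation through the $E_n$ is appealing, but the uniform-in-$n$ passage to the limit is exactly the analytic content that DCT encodes --- you have relocated the difficulty, not removed it.

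Your frequency-module step is the cleanest part. Granted asymptotic almost periodicity of $(W_n^2)$ with limit bounded away from zero, the ratio identity from part~(f) of Theorem~\ref{bigtheorem} does yield asymptotic almost periodicity of $(a_n)$ with $\mathcal{M}(\widetilde a)\subseteq\mathcal{M}(\widetilde W)$, and then Delyon--Souillard combined with part~(i) and the DOS identification forces $\{m2^{-n}\}\subseteq\mathcal{M}(\widetilde a)$. That piece is sound conditional on the rest; the rest is conditional on open problems, so the statement remains, as the paper labels it, a conjecture.
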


\section{Spacing properties of orthogonal polynomials and further discussion}
\begin{figure}[!htb]
	\centering
	\includegraphics[scale=.6]{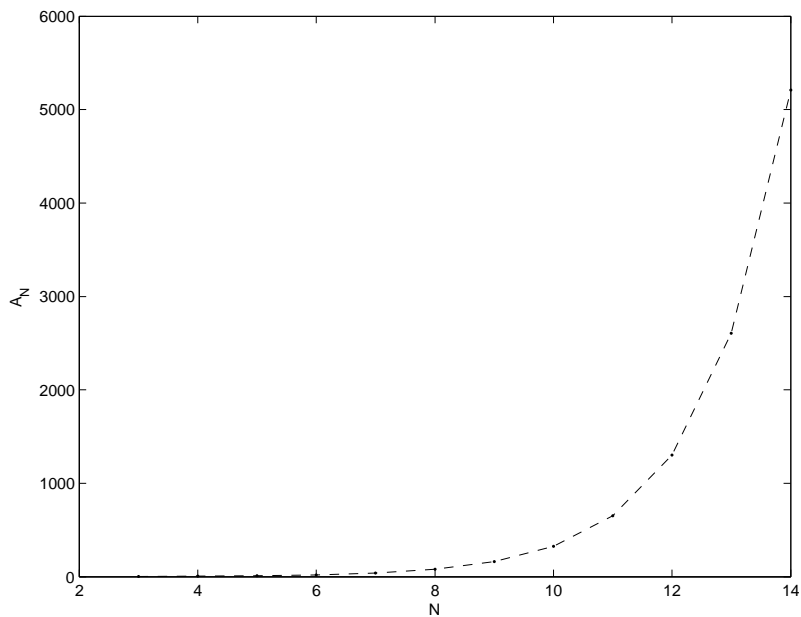}
	\caption{Maximal ratios of the distances between adjacent zeros}
	\label{fig:fig8}
\end{figure}
\begin{figure}[!htb]
	\centering
	\includegraphics[scale=.6]{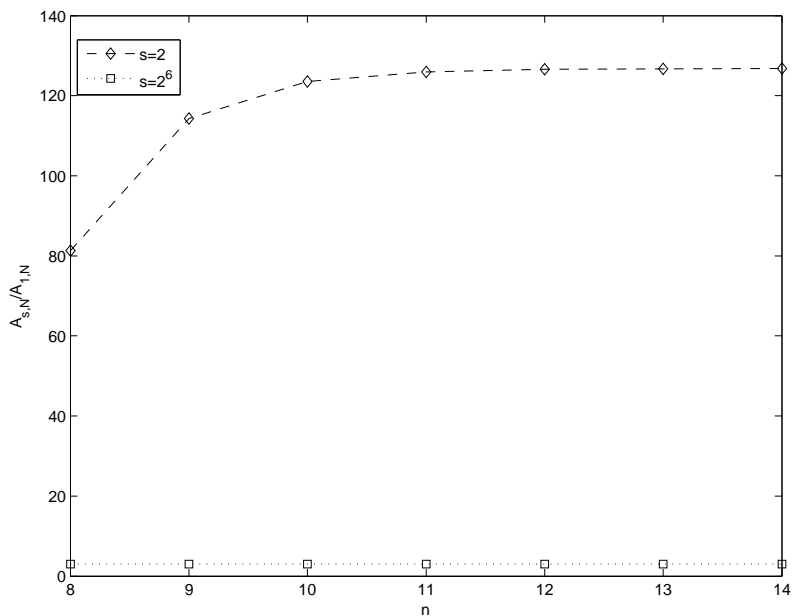}
	\caption{Ratios of the distances between prescribed adjacent zeros}
	\label{fig:fig9}
\end{figure}
For a measure $\mu$ having support on $\mathbb{R}$, let $Z_n(\mu):=\{x: Q_n(x;\mu)=0\}$. For $n> 1$ with $n\in\mathbb{N}$, we define $M_n(\mu)$ by
\begin{equation*}
	M_n(\mu):= \inf_{\substack{x,x^\prime\in Z_n(\mu)\\ x\neq x\prime}}|x-x^\prime|.
\end{equation*}
For a given $\gamma=(\gamma_k)_{k=1}^\infty$ let us enumerate the elements of $Z_N(\mu_{K(\gamma)})$ by $x_{1,N}<\dots <x_{N,N}$. The behavior of $	\left(M_N\left(\mu_{K(\gamma)}\right)\right)_{N=1}^\infty$, in other words, the global behavior the spacing of the zeros, were investigated in \cite{g1}. Here, we numerically study some aspects of the local behavior of the zeros.

We consider only Model 1 since the calculations give similar results for the other models. For $N=2^3, 2^4, \dots, 2^{14},$ let $A_{n,N}:= |x_{2n,N}-x_{2n-1,N}|$ where $n\in\{1,\dots, N/2\}$. We computed (see Figure ~\ref{fig:fig8}) $A_N:=\displaystyle\max_{n,m\in\{1,\ldots,N/2\}} \frac{A_{n,N}}{A_{m,N}}$ for each such $N$. 

$(A_{2^n})_{n=3}^{14}$ increases fast and this indicates that $(A_{2^n})_{n=2}^{\infty}$ is unbounded. 

For $N=2^{14}$ and  $s=2$, $s=6$ we plotted (see Figure ~\ref{fig:fig9}) $A_{s,N}/A_{1,N}$. These ratios tend to converge fast.

In the next conjecture, we exclude the case of small $\gamma$ for the following reason: Let $\gamma=(\gamma_k)_{k=1}^\infty$ satisfies $\sum_{k=1}^\infty \gamma_k=M<\infty$ with $\gamma_k\leq 1/32$ for all $k\in\mathbb{N}$ and $\delta_k:=\gamma_1\cdots\gamma_k$. Then $A_{j,2^k}\leq \exp{(16M)}\delta_{k-1}$ for all $k>1$ by Lemma 6 in \cite{gonc}. By Lemma 4 and Lemma 6 in \cite{gonc} we conversely have $A_{j,2^k}\geq (7/8)\delta_{k-1}$. Therefore $A_{2^k}\leq (8/7) \exp{(16M)}$. Hence, $(A_{2^n})_{n=2}^\infty$ is bounded. 
\begin{Conjecture}
	For each $\gamma=(\gamma_k)_{k=1}^\infty$ with $\inf_k \gamma_k>0$, $(A_{2^k})_{k=1}^\infty$ is an unbounded sequence. If $s=2^k$ for some $k\in\mathbb{N}$, there is a $c_0\in\mathbb{R}$ depending on $k$ such that $$\lim_{n\rightarrow\infty} \frac{A_{s,2^n}}{A_{1,2^n}}=c_0.$$
\end{Conjecture}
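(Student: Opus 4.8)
The plan is to reduce everything to the explicit description of $Z_{2^s}(\mu_{K(\gamma)})$ in part~(h) of Theorem~\ref{bigtheorem} and to the fact that the $v_{i,j}$ are the two inverse branches of $f_j$. First I would record the combinatorics. Since $F_{s}\circ(v_{i_1,1}\circ\cdots\circ v_{i_s,s})=\mathrm{id}$ on $[-1,1]$, a short computation with $f_{s+1}$ shows that, for each length-$s$ sign word, the two points obtained by appending $1$ or $2$ are the only two zeros of $Q_{2^{s+1}}$ in the corresponding component of $E_s$; hence $Z_{2^{s+1}}$ refines $Z_{2^s}$ consistently with the ordering, the $n$-th zero of $Z_{2^s}$ splitting into $x_{2n-1,2^{s+1}}$ and $x_{2n,2^{s+1}}$. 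Counting the decreasing factors among $v_{i_1,1},\dots$ (with $v_{1,1}$ decreasing and, for $j\ge2$, $v_{1,j}$ increasing, $v_{2,j}$ decreasing) identifies which child is the larger; carrying this out gives that $x_{1,2^{m}}$ has sign word $(1,\dots,1)$, while for $s=2^{k}$ the zero $x_{2^{k},2^{m}}$ has sign word $(1^{\,m-k},2,1^{\,k-1})$. Writing the length-$m$ compositions as $\Psi_m\circ\Theta$ with common prefix $\Psi_m=v_{1,1}\circ\cdots\circ v_{1,m-k}$, the tails are $\Theta_1=v_{1,m-k+1}\circ\cdots\circ v_{1,m}$ and $\Theta_s=v_{2,m-k+1}\circ v_{1,m-k+2}\circ\cdots\circ v_{1,m}$, and since $v_{2,\ell}=-v_{1,\ell}$ one gets the identity $\Theta_s=-\Theta_1$.

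Granting this, put $\rho:=\sqrt{1-2\gamma_{m+1}}$, $P:=\Theta_1(\rho)$, $Q:=\Theta_1(-\rho)$; then $0<Q<P<1$, and since $\Psi_m$ is monotone,
\[
\frac{A_{2^k,\,2^{m+1}}}{A_{1,\,2^{m+1}}}=\frac{|\Psi_m(-P)-\Psi_m(-Q)|}{|\Psi_m(P)-\Psi_m(Q)|}=\frac{\int_{Q}^{P}|\Psi_m'(-t)|\,dt}{\int_{Q}^{P}|\Psi_m'(t)|\,dt}.
\]
I would analyse $\phi_m(t):=|\Psi_m'(-t)|/|\Psi_m'(t)|=\prod_{j=1}^{m-k} g_j(w_j(-t))/g_j(w_j(t))$, where $g_j(u):=|v_{i,j}'(u)|$ (independent of $i$), $w_{m-k}(t)=t$ and $w_{j-1}(t)=v_{1,j}(w_j(t))$. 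Two facts do the work: each $v_{1,\ell}$ maps $[-1,1]$ into $[\sqrt{\varepsilon_\ell},1]\subset(0,1]$ and has derivative $<\tfrac12$ there; and $\inf_\ell\gamma_\ell>0$ forces $\rho\le\rho_{\max}<1$, whence $P\le 1-(\inf_\ell\gamma_\ell)^{k}(1-\rho_{\max})<1$ uniformly in $m$. From the first, $|w_j(t)-w_j(-t)|$ decays geometrically in $(m-k)-j$, so $\sum_{j<m-k}\bigl|\log\bigl(g_j(w_j(-t))/g_j(w_j(t))\bigr)\bigr|$ is bounded uniformly in $m$ and $t$; together with a uniform two-sided bound on the single $j=m-k$ factor (which uses $P$ bounded away from $1$) this yields $c^{-1}\le\phi_m(t)\le c$ uniformly, hence boundedness of $(A_{2^k,2^{m+1}}/A_{1,2^{m+1}})_m$. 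For the limit one checks that, for fixed $j$ and $t$, $w_j(t)=v_{1,j+1}\circ\cdots\circ v_{1,m-k}(t)$ is Cauchy as $m\to\infty$, so $\phi_m\to\phi_\infty$ locally uniformly on $(0,1]$; if in addition $P,Q$ converge and the normalised weights $|\Psi_m'(t)|\,dt/\!\int_Q^P|\Psi_m'|$ converge weakly, the weighted average of $\phi_m$ tends to a constant $c_0=c_0(k)$. This last step is the main obstacle: $[Q,P]$ and the weight depend on the moving window $\gamma_{m-k+1},\dots,\gamma_{m+1}$ and on the internal geometry of $\Psi_m$, which stabilise when $(\gamma_k)$ converges (true for all four models, in particular Model~1) but need not for an arbitrary $\gamma$ with $\inf_k\gamma_k>0$; so I would carry out this half under the extra, natural hypothesis that $(\gamma_k)$ converges and flag that the general case requires controlling these windows.

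For the unboundedness of $(A_{2^{k}})_{k}$ I would compare two explicit gaps at level $k$. Each $A_{n,2^{k}}=\int_{-\rho_k}^{\rho_k}|\Psi_{\mathbf i}'(t)|\,dt$ for the length-$(k-1)$ sign word $\mathbf i$ of the parent zero, with $|\Psi_{\mathbf i}'(t)|=\prod_{j=1}^{k-1}g_j(u_j)$ and $g_j(u)=\gamma_j\bigl(\text{or }\gamma_j/2\bigr)/\sqrt{1-2\gamma_j(1-u)}$. For $\mathbf i=(1,\dots,1)$ the iterates run to the fixed point $1$, so $g_j(u_j)\le\gamma_j C_j$ with $\prod_j C_j$ convergent (the one boundary factor controlled by $\rho_{\max}<1$), giving $A_{1,2^{k}}\le C\rho_k\prod_{j<k}\gamma_j$. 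For $\mathbf i=(2,\dots,2)$ the iterates satisfy $u_j<0$ for $j\le k-2$, hence $g_j(u_j)>\gamma_j/\sqrt{1-2\gamma_j}\ge\gamma_j\alpha$ with $\alpha:=(1-2\inf_\ell\gamma_\ell)^{-1/2}>1$, giving $\max_n A_{n,2^{k}}\ge\tfrac12\alpha^{\,k-2}\rho_k\prod_{j<k}\gamma_j$. Dividing, $A_{2^{k}}\ge\alpha^{\,k-2}/(2C)\to\infty$. The hypothesis $\inf_k\gamma_k>0$ enters precisely through $\alpha>1$, consistent with the remark preceding the conjecture that $(A_{2^{k}})$ is bounded when $\gamma_k\le1/32$ and $\sum_k\gamma_k<\infty$.
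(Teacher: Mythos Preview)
The statement you are addressing is labelled a \emph{Conjecture} in the paper, and the paper does not contain a proof of it. The authors' support consists of the numerical experiments behind Figures~\ref{fig:fig8} and~\ref{fig:fig9} together with the short computation (using Lemmas~4 and~6 of \cite{gonc}) explaining why the small-$\gamma$ regime is excluded. There is therefore no ``paper's own proof'' to compare your proposal against; what you have written is an attempt to \emph{settle} an open problem, not to reproduce an existing argument.

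That said, it is worth assessing your attempt on its own terms. Your unboundedness argument is essentially sound: the identification of the pairs $A_{n,2^k}$ with images of $[-\rho_k,\rho_k]$ under the length-$(k-1)$ inverse-branch compositions is correct, and the comparison of the all-$1$ word (iterates contracting to the fixed point $1$, giving $\prod C_j$ convergent) with the all-$2$ word (iterates staying in the negative half, giving the extra factor $\alpha^{k-2}$ with $\alpha=(1-2\inf_\ell\gamma_\ell)^{-1/2}>1$) does yield $A_{2^k}\to\infty$. This is a genuine contribution beyond what the paper proves.

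The convergence half, however, is not a proof of the conjecture as stated. You yourself flag that the interval $[Q,P]$ and the weight $|\Psi_m'(t)|\,dt$ depend on the moving window $\gamma_{m-k+1},\dots,\gamma_{m+1}$, and that your limit argument needs these to stabilise. Requiring $(\gamma_k)$ to converge is a real additional hypothesis: the conjecture asks only for $\inf_k\gamma_k>0$, which allows, say, $\gamma_k$ oscillating between two values in $(0,1/4)$, and in that case neither $P,Q$ nor the normalised weight has any reason to converge. Similarly, your claim that $w_j(t)=v_{1,j+1}\circ\cdots\circ v_{1,m-k}(t)$ is Cauchy in $m$ for fixed $j$ uses the same stabilisation. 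So your argument proves the limit statement under the extra assumption $\gamma_k\to\gamma_\infty$ (which covers all four models in the paper), but leaves the general conjecture open --- exactly as the paper does.
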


For the parameters $c>3$, $H_{\mu_{J(f)}}$ is almost periodic where $f(z)=z^2-c$, see \cite{bel2}. It was conjectured in \cite{belis} that $H_{\mu_{J(f)}}$ is always almost periodic as soon as $c>2$. For $c=2$, $H_{\mu_{J(f)}}$ is not almost periodic since $a_1=2$ and $a_n=1$ for $n\geq2$ but it is asymptotically almost periodic. Therefore if this conjecture is true then we have the following: $H_{\mu_{J(f)}}$ is almost periodic if and only if $J(f)$ is non Parreau-Widom.

We did not make any distinction between asymptotic almost periodicity and almost periodicity in Section 3 and Section 4 since these two cases are indistinguishable numerically. But we remark that if $\liminf a_n\neq 0$ then the asymptotics $\lim_{j\rightarrow\infty} a_{j\cdot 2^s+n}= a_n$ cease to hold immediately. We do not expect $H_{\mu_{K(\gamma)}}$ to be almost periodic for the Parreau-Widom case for that reason. For a parameter $\gamma=(\gamma_s)_{s=1}^\infty$ such that $\lim_{j\rightarrow\infty} a_{j\cdot 2^s+n}= a_n$ holds for each $s$ and $n$ it is likely that  $H_{\mu_{K(\gamma)}}$ is almost periodic. These asymptotics hold only for the non Parreau-Widom case but it is unclear that if these hold for all parameters making $K(\gamma)$ non Parreau-Widom.

Hausdorff dimension of a unit Borel measure $\mu$ supported on $\mathbb{C}$ is defined by $\mathrm{dim}(\mu):=\inf \{\mathrm{HD}(K): \mu(K)=1\}$ where $HD(\cdot)$ stands for the Hausdorff dimension of the given set. Hausdorff dimension of equilibrium measures were studied for many fractals (see \cite{makarov} for an account of the previous results) and in particular for autonomous polynomials Julia sets (see e.g. \cite{prz}). If $f$ is a nonlinear monic polyomial and $J(f)$ is a Cantor set then by p. 176 in\cite{prz} (see also p. 22 in \cite{makarov}) we have $\mathrm{dim}\left(\mu_{J(f)}\right)<1$. For $K(\gamma)$, $\sum_{s=1}^\infty \sqrt{\varepsilon_s}<\infty$ implies that $\mathrm{dim}\left(\mu_{K(\gamma)}\right)=1$ since $\mu_{(K(\gamma))}$ and the Lebesgue measure restricted to $K(\gamma)$ (see 4.6.1 in \cite{sodin}) are mutually absolutely continuous. Moreover, our numerical experiments suggest that $K(\gamma)$ has zero Lebesgue measure for non Parreau-Widom case. It may also be true that $\mathrm{dim}\left(\mu_{K(\gamma)}\right)<1$ for this particular case. Hence, it is an interesting problem to find a systematic way for calculating the dimension of equilibrium measures of $K(\gamma)$ and generalized Julia sets in general.


\begin{thebibliography}{155}


\bibitem{g1} Alpan, G: {Spacing properties of the zeros of orthogonal polynomials on Cantor sets via a sequence of polynomial mappings}, Preprint (2015), arXiv:1509.07391v2

\bibitem{alpgon3}Alpan, G., Goncharov, A.: {Two measures on Cantor sets}, J. Approx. Theory. \textbf{186}, 28--32 (2014)

\bibitem{alpgon4}Alpan, G., Goncharov, A.: {Widom factors for the Hilbert norm}, Banach Center Publ. \textbf{107}, 11--18 (2015) 

\bibitem{alpgon}Alpan, G., Goncharov, A.: {Orthogonal polynomials for the weakly equilibrium Cantor sets}, accepted for publication in Proc. Amer. Math. Soc. 

\bibitem{alpgon2}Alpan, G., Goncharov, A.: {Orthogonal polynomials on generalized Julia sets}, Preprint (2015), arXiv:1503.07098v3

\bibitem{alpgonhat}Alpan, G., Goncharov, A., Hatino\u{g}lu, B.: {Some asymptotics for extremal polynomials}, accepted for publication in "Computational Analysis: Contributions from AMAT 2015" in Springer-New York. 

\bibitem{andr} Andrievskii, V.V.: {Chebyshev Polynomials on a System of Continua}, Constr. Approx., (2015) doi:10.1007/s00365-015-9280-8

\bibitem{avil}Avila, A., Jitomirskaya, S.: {The Ten Martini problem}. Ann. of Math. \textbf{170}, 303--342 (2009)

\bibitem{Barnsley1}Barnsley M.F., Geronimo, J.S., Harrington, A.N.: {Orthogonal polynomials associated with invariant measures on Julia sets}. Bull. Amer. Math. Soc. \textbf{7}, 381--384 (1982)

\bibitem{Barnsley3}Barnsley M.F., Geronimo, J.S., Harrington, A.N.: {Infinite-Dimensional Jacobi Matrices Associated with Julia Sets}. Proc. Amer. Math. Soc. \textbf{88}(4), 625--630 (1983)

\bibitem{Barnsley4}Barnsley, M.F., Geronimo, J.S., Harrington, A.N.: {Almost periodic Jacobi matrices associated with Julia sets for polynomials}. Comm. Math. Phys. \textbf{99}(3), 303--317 (1985)

\bibitem{bel2}Bellissard, J., Bessis, D., Moussa, P.: {Chaotic states of almost periodic Schr\"{o}dinger operators}. Phys. Rev. Lett. \textbf{49}, 701--704 (1982)

\bibitem{belis} Bellissard, J., Geronimo, J., Volberg, A., Yuditskii, P: {Are they limit periodic?} Complex analysis and dynamical systems II, Contemp. Math., \textbf{382} , Amer. Math. Soc., Providence, RI, 43–53 (2005)


\bibitem{bes2} Bessis, D: {Orthogonal polynomials Pad\'{e} approximations, and Julia sets}, in: \textit{Orthogonal Polynomials: Theory \& Practice, 294} (P. Nevai ed.),  Kluwer, Dordrecht, 55--97 (1990)

\bibitem{besger} Bessis, D., Geronimo, J.S., Moussa, P.:{Function weighted measures and orthogonal polynomials on Julia sets}, Constr. Approx. \textbf{4}, 157--173 (1988)

	
\bibitem{Brolin}Brolin, H.: {Invariant sets under iteration of rational functions}, Ark. Mat. \textbf{6}(2), 103--144 (1965)
	
\bibitem{Bruck1}Br\"{u}ck, R.: {Geometric properties of Julia sets of the composition of polynomials of the form $z^2 +c_n$}, Pac. J. Math. \textbf{198}, 347--372 (2001)
	
\bibitem{Bruck} Br\"{u}ck, R., B\"{u}ger, M.: {Generalized iteration}, Comput. Methods Funct. Theory \textbf{3}, 201--252 (2003)
	
\bibitem{Buger} B\"{u}ger, M.: {Self-similarity of Julia sets of the composition of polynomials}, Ergodic Theory Dyn. Syst. \textbf{17}, 1289--1297 (1997)
	
\bibitem{christiansen}Christiansen, J.S.: {Szeg\H{o}'s theorem on Parreau-Widom sets}, Adv. Math. \textbf{229}, 1180--1204 (2012)

\bibitem{chriss}Christiansen, J.S., Simon, B., Zinchenko, M.: {Finite Gap Jacobi Matrices, I. The Isospectral Torus}. Constr. Approx. \textbf{32}, 1--65 (2009)

\bibitem{Chris}Christiansen, J.S., Simon, B., Zinchenko, M.: {Finite Gap Jacobi Matrices, II. The Szeg\"o Class}. Constr. Approx. \textbf{33}(3), 365--403 (2011)

\bibitem{csz}Christiansen, J.S., Simon, B., Zinchenko, M., {Asymptotics of Chebyshev Polynomials, I. Subsets of $\mathbb{R}$}, Preprint (2015), arXiv:1505.02604v1

\bibitem{delyon} Delyon, F., Souillard, B.:{The rotation number for finite difference operators and its properties}. Comm. Math. Phys.
\textbf{89}, 415--426 (1983)

\bibitem{dombrowski}Dombrowski, J.:{Quasitriangular matrices}. Proc. Amer. Math. Soc. \textbf{69}, 95--96 (1978)

\bibitem{fekete}Fekete, M.: Uber die Verteilung der Wurzeln bei gewissen algebraischen Gleichungen mit ganzzahligen 
Koeffizienten. Math. Z. \textbf{17}, 228–249 (1923) (in German)

\bibitem{gerhar}Geronimo, J.S., Harrell E.M. II, Van Assche, W.:. {On the asymptotic distribution of eigenvalues of banded matrices}. Constr. Approx. \textbf{4}, 403--417 (1988)

\bibitem{van assche}Geronimo, J.S., Van Assche, W.: {Orthogonal polynomials on several intervals via a polynomial mapping}, Trans. Amer. Math. Soc. \textbf{308}, 559--581 (1988)

\bibitem{golub} Golub, G.H., Welsch, J.H.:{Calculation of Gauss Quadrature Rules}, Math. Comp. \textbf{23}, 221--230 (1969)

\bibitem{gonc}Goncharov, A.: {Weakly equilibrium Cantor type sets}, Potential Anal. \textbf{40}, 143--161 (2014)

\bibitem{gonchat} Goncharov A., Hatino\u{g}lu, B.: {Widom Factors} , Potential Anal. \textbf{42}, 671--680 (2015)

\bibitem{Heilman}Heilman, S.M., Owrutsky, P., Strichartz, R.: {Orthogonal polynomials with respect to self-similar measures}. Exp. Math. \textbf{20}, 238-259 (2011)
	
	

\bibitem{kruger}Kr{\"u}ger, H., Simon, B.: {Cantor polynomials and some related classes of OPRL}. J. Approx. Theory \textbf{191}, 71--93 (2015) 
	

\bibitem{makarov} Makarov, N.:{Fine structure of harmonic measure}, St. Petersburg Math. J. \textbf{10}, 217–268 (1999)

\bibitem{Mane} Ma\~{n}\'{e}, R., Da Rocha, L.F.: {Julia sets are uniformly perfect}, Proc. Amer. Math. Soc. \textbf{116}(1), 251--257 (1992)

\bibitem{mant1}Mantica, G.: {A Stable Stieltjes Technique to Compute Jacobi Matrices Associated with Singular Measures}, Const. Approx. \textbf{12}, 509--530 (1996)

\bibitem{mant2}Mantica, G.: {Quantum Intermittency in Almost-Periodic Lattice Systems derived from their Spectral Properties}, Physica D \textbf{103}, 576--589 (1997)

\bibitem{mant3}Mantica, G.: {Numerical computation of the isospectral torus of finite gap sets and of IFS Cantor sets}, Preprint (2015), arXiv:1503.03801

\bibitem{mant4}Mantica, G.: {Orthogonal polynomials of equilibrium measures supported on Cantor sets}, J. Comput. Appl. Math. \textbf{290}, 239--258 (2015)


\bibitem{yuditskii}Peherstorfer, F., Volberg, A., Yuditskii, P.: {Limit periodic Jacobi matrices with a prescribed $p$-adic hull and a singular continuous spectrum}, Math. Res. Lett. \textbf{13}, 215--230 (2006)

\bibitem{peher} Peherstorfer, F., Yuditskii, P.: {Asymptotic behavior of polynomials orthonormal on a homogeneous
set}, J. Anal. Math. \textbf{89}, 113-154 (2003)

\bibitem{petersen} Petersen, K: {Ergodic Theory}, Cambridge Studies in Advanced Mathematics, Cambridge University Press, Cambridge (1983) 

\bibitem{pommerenke} Pommerenke, Ch.: {On the Green's function of Fuchsian groups}, Ann. Acad. Sci. Fenn. Ser. A I Math. \textbf{2}, 409–427 (1976)

\bibitem{prz} Przytycki, F.: {Hausdorff dimension of harmonic measure on the boundary of an attractive basin for a holomorphic map}, Invent. Math. \textbf{80}, 161–179 (1985)

\bibitem{Ransford}Ransford, T.: {Potential theory in the complex plane}, Cambridge University Press, (1995)

	
\bibitem{saff}Saff, E.B., Totik, V.: {Logarithmic potentials with external fields}, Springer-Verlag, New York (1997)

\bibitem{sif} Schiefermayr, K.: {A lower bound for the minimum deviation of the Chebyshev polynomial on a compact real set}, East J. Approx. \textbf{14}, 223--233 (2008)
	

\bibitem{Sim3}Simon, B.: {Szeg\H{o}'s Theorem and Its Descendants: Spectral Theory for $L^2$ Perturbations of Orthogonal Polynomials}, Princeton University Press, Princeton, NY (2011)
	
\bibitem{sodin}Sodin, M., Yuditskii, P.: {Almost periodic Jacobi matrices with homogeneous spectrum, infinite-dimensional Jacobi inversion, and Hardy spaces of character-automorphic functions}, J. Geom. Anal. \textbf{7}, 387--435 (1997)

\bibitem{szego}Szeg\H{o}, G.: Bemerkungen zu einer Arbeit von Herrn M. Fekete:  Uber die Verteilung der Wurzeln bei gewissen algebraischen Gleichungen mit ganzzahligen Koeffizienten. Math. Z. \textbf{21}, 203–208 (1924) (in
German)

\bibitem{teschl} Teschl, G.: {Jacobi Operators and Completely Integrable Nonlinear Lattices}, Math. Surv. and Mon. 72, Amer. Math. Soc., Rhode Island (2000)

\bibitem{tot11} Totik, V.: {Chebyshev constants and the inheritance problem}, J. Approx. Theory. \textbf{160}, 187--201 (2009)

\bibitem{tot2} Totik, V.: {Chebyshev Polynomials on Compact Sets}, Potential Anal. \textbf{40}, 511--524 (2014)

\bibitem{tot1} Totik, V., Yuditskii, P.: {On a conjecture of Widom}, J. Approx. Theory \textbf{190}, 50--61 (2015)

\bibitem{totikvar}Totik, V., Varga, T.: {Chebyshev and fast decreasing polynomials}. Proc. London Math. Soc. doi:10.1112/plms/pdv014

	
\bibitem{ase}Van Assche, W.: {Asymptotics for orthogonal polynomials}, Lecture Notes in Mathematics, 1265, Springer-Verlag, Berlin (1987)
	

\bibitem{widom}Widom, H.: {Polynomials associated with measures in the complex plane}. J. Math. Mech. \textbf{16}, 997--1013 (1967)

\bibitem{widom2}Widom, H: {Extremal polynomials associated with a system of curves in the complex plane}. Adv. Math. \textbf{3}, 127--232 (1969)

\bibitem{yuditskii}Yudistkii, P: {On the Direct Cauchy Theorem in Widom Domains: Positive and Negative Examples}, Comput. Methods Funct. Theory \textbf{11}, 395--414 (2012)
	
	
\end{thebibliography}
\end{document}